\newcommand{\R}{\mathbb{R}} 
\newcommand{\Q}{\mathbb{Q}} 
\newcommand{\uhp}{\mathbb{H}}
\newcommand{\C}{\mathbb{C}} 
\newcommand{\Z}{\mathbb{Z}} 
\newcommand{\F}{\mathbb{F}} 
\newcommand{\N}{\textup{N}}
\newcommand{\Zhat}{\hat{\mathbb{Z}}}
\newcommand{\adeles}{\mathbb{A}}
\newcommand*{\domain}{\mathbb{D}}
\newcommand*{\reg}{\mathrm{reg}}
\newcommand{\OD}{\calO_{D}}
\newcommand{\ODhatt}{\hat{\calO}_{D}^{\times}}
\newcommand{\bs}{\backslash}
\newcommand{\calB}{\mathcal{B}}
\newcommand{\calC}{\mathcal{C}}
\newcommand{\calF}{\mathcal{F}}
\newcommand{\calI}{\mathcal{I}}
\newcommand{\calO}{\mathcal{O}}
\newcommand{\calS}{\mathcal{S}}
\newcommand{\different}[1]{\mathfrak{d}_#1}
\newcommand{\frakA}{\mathfrak{A}}
\newcommand{\fraka}{\mathfrak{a}}
\newcommand{\frakb}{\mathfrak{b}}
\newcommand{\frakd}{\mathfrak{d}}
\newcommand{\frake}{\mathfrak{e}}
\newcommand{\frakp}{\mathfrak{p}}
\newcommand{\legendre}[2]{\left( \frac{\mathstrut #1}{#2} \right)}
\newcommand{\Tmatrix}{\begin{pmatrix}1 & 1 \\ 0 & 1\end{pmatrix}}
\newcommand{\Smatrix}{\begin{pmatrix}0 & -1 \\ 1 & 0\end{pmatrix}}
\newcommand{\twomat}[4]{\begin{pmatrix}
                        #1 & #2 \\ #3 & #4 
                       \end{pmatrix}}
\newcommandx*{\modformsspace}[4][1=,2=,3=]{#4_{\ifthenelse{\equal{#1}{}}{}{#1
				\ifthenelse{\equal{#2}{}}{}{,\wrep[#2][#3]{}}
				}}}
\newcommandx*{\modf}[3][1=,2=,3=]{\modformsspace[#1][#2][#3]{M}}
\newcommandx*{\wmodf}[3][1=,2=,3=]{\modformsspace[#1][#2][#3]{M^!}}
\newcommandx*{\hwmf}[3][1=,2=,3=]{\modformsspace[#1][#2][#3]{\mathcal{N}}}
\newcommandx*{\hwmfpole}[3][1=,2=,3=]{\modformsspace[#1][#2][#3]{N}}
\newcommandx*{\cuspf}[3][1=,2=,3=]{\modformsspace[#1][#2][#3]{S}}
\newcommandx*{\wrep}[3][1=,2=]{
	\ifthenelse{\equal{#1}{}}{\omega}{
		\ifthenelse{\equal{#2}{}}{\rho}{\overline{\rho}}_{#1}}
	\ifthenelse {\equal {#3}{}} { } { \left(#3\right) } }
\newcommand{\abs}[1]{\left\vert#1\right\vert}
\DeclareMathOperator{\Aut}{Aut}
\DeclareMathOperator{\SL}{SL}
\DeclareMathOperator{\GL}{GL}
\DeclareMathOperator{\Cl}{Cl}
\DeclareMathOperator{\Clk}{Cl_{k}}
\DeclareMathOperator{\Clkd}{Cl_{k}^{\ast}}
\DeclareMathOperator{\Clks}{Cl_{k}^{2}}
\DeclareMathOperator{\tr}{tr}
\DeclareMathOperator{\Og}{O}
\DeclareMathOperator{\SO}{SO}
\DeclareMathOperator{\GSpin}{GSpin}
\DeclareMathOperator{\sgn}{sgn}
\DeclareMathOperator*{\CT}{CT}
\DeclareMathOperator{\res}{res}
\newtheorem{theorem}{Theorem}[section]
\newtheorem{proposition}[theorem]{Proposition}
\newtheorem{lemma}[theorem]{Lemma}
\newtheorem{corollary}[theorem]{Corollary}
\theoremstyle{definition}
\newtheorem{definition}[theorem]{Definition}
\newtheorem{remark}[theorem]{Remark}
\newcommand{\QD}[1][]{Q_\Delta\ifthenelse{\equal{#1}{}}{}{\left(#1\right)}}
\renewcommand{\S}{\mathcal{S}}
\newcommand{\new}{\mathrm{new}}
\newcommand{\colvec}[2]{\begin{pmatrix}  #1 \\ #2 \end{pmatrix}}
\DeclareMathOperator{\sym}{sym}
\numberwithin{equation}{section}
\begin{document}
\title[Vector valued theta functions associated with binary quadratic forms]{Vector valued theta functions associated with \\ binary quadratic forms}
 
\author{Stephan Ehlen}
\subjclass[2010]{11F11, 11F27, 11E16}
\email{stephan.ehlen@mcgill.ca}
\address{McGill University, Department of Mathematics and Statistics, 805 Sherbrooke St. West, Montreal, Quebec, Canada H3A 0B9}

\begin{abstract}
We study the space of vector valued theta functions for the Weil representation of a positive definite even lattice of rank two
with fundamental discriminant.
We work out the relation of this space to the corresponding scalar valued theta functions of weight one
and determine an orthogonal basis with respect to the Petersson inner product.
Moreover, we give an explicit formula for the Petersson norms of the elements of this basis.
\end{abstract}

\thanks{This work was partly supported by DFG grant BR-2163/2-1.}

\maketitle

\section{Introduction and statement of results}
Integral binary quadratic forms and the automorphic properties of their theta functions 
are well known.  It is the purpose of the present note to describe the related space of vector valued theta functions
transforming with the Weil representation.

Let $P$ be an even positive-definite lattice of rank $2$ with quadratic form $Q$.
For simplicity, we assume that the discriminant $D<0$ of $Q$ is a fundamental discriminant.
The theta function attached to $P$ is a holomorphic modular form of weight $1$
and transforms with the Weil representation $\rho_P$ of $\SL_2(\Z)$ (see \cref{sec:regul-theta-lifts}).
In fact, there is a family of theta functions attached to $P$ that have the same weight
and transformation behaviour. These theta functions essentially correspond to the lattices in the genus of $P$.

Let $U = P \otimes_\Z \Q$ be the corresponding rational quadratic space containing these lattices.
The general spin group $T(\adeles_f) = \GSpin_U(\adeles_f)$, 
a central extension of the special orthogonal group, acts transitively on the lattices in the genus of $P$.

We describe this action in detail in \cref{sec:GspinU}.
We let $K \subset T(\adeles_f)$ be an open compact sugroup that preserves $P$ and acts
trivially on $P'/P$, where $P'$ is the dual lattice of $P$.
Consider the class group
\[
\Cl(K) = H(\Q) \bs  H(\adeles_f) / K
\]
and define a theta function on the product of the complex upper half-plane $\uhp$
and $\Cl(K)$ as
\[
   \Theta_P(\tau,h) = \sum_{\beta \in P'/P} \sum_{\lambda \in h(P+\beta)} e(Q(\lambda)\tau) \frake_\beta,
\]
where $e(x) = e^{2 \pi i x}$ and $\frake_\beta$ denotes the standard basis element of the group ring $\C[P'/P]$
corresponding to $\beta \in L'/L$. We will frequently write $\frake_0$ for $\frake_{0+P}$. 
We define the space $\Theta(P)$ of theta functions associated with $P$ to be the complex vector space generated by the forms $\Theta_{P}(\tau,h)$ for $h \in \Cl(K)$.
It is a subspace of $M_{1,P}$, the space of modular forms of weight $1$ and representation $\rho_P$.
Recall the definition of the Petersson inner product $(f,g)$, where $f,g$ are both modular forms of the same weight $k$ and representation $\rho_P$ as
\[
  (f,g) = \int_{\SL_2(\Z) \bs \uhp} \langle f(\tau), \overline{g(\tau)} \rangle v^k \frac{dudv}{v^2},
\]
where $\tau = u + iv$ with $u,v \in \R$ and $\langle \cdot, \cdot \rangle$ denotes the bilinear pairing
on $\C[P'/P]$, such that $\langle \frake_\mu, \frake_\nu \rangle = \delta_{\mu,\nu}$.
The integral converges if at least one of $f$ and $g$ is a cusp form.

It is useful to consider the following linear combinations of theta functions in $\Theta(P)$.
Let $\psi$ be a character of $\Cl(K)$. We let
\[
    \Theta_{P}(\tau,\psi) = \sum_{h \in \Cl(K)} \psi(h) \Theta_{P}(\tau,h).
\]
Our focus lies on lattices of the following form.
Let $P = \fraka$ be a fractional ideal in the imaginary quadratic field $k_D$ of discriminant $D<0$
with quadratic form $\N(x)/\N(\fraka)$. Moreover, let $K = \ODhatt = (\calO_D \otimes_\Z \hat\Z)^\times$,
where $\calO_D$ is the ring of integers of $k_D$ and $\hat\Z = \prod_p \Z_p$.
In this case, the group $\Cl(K)$ is isomorphic to the class group $\Clk$ of $k$.
\begin{theorem}\ \label{thm:thetaPbasis}
For $P$ and $K$ as above we have:
  \begin{enumerate}
  \item If $\psi = 1$, then  $\Theta_{P}(\tau,\psi) = E_{P}(\tau)$ is an Eisenstein series,
    spanning the space of Eisenstein series of weight one and representation $\rho_P$.
  \item If $\psi \neq 1$, then $\Theta_{P}(\tau,\psi)$ is a cusp form.
  \item 
Choose a system $\calC$ of representatives of characters on $\Clk$ modulo complex conjugation.
 Then the set
  \[
    \calB(P) = \{\Theta_{P}(\tau,\psi)\, \mid\, \psi \in \calC \}
  \]
  is an orthogonal basis for $\Theta(P)$.
 \item In particular, the dimension of the space $\Theta(P)$ is equal to
   \[
      \dim \Theta(P) = \frac{h_k+2^{t-1}}{2},
   \]
   where $h_{k}$ is the class number of $k$ and $t$ is the number of prime
   divisors of $D$.
  \end{enumerate}
\end{theorem}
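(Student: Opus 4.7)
The plan is to prove the four statements in order, using throughout the identification of $\Theta_P(\tau,h)$ with a classical weight-one theta series attached to an ideal class of $k_D$. Under the isomorphism $\Cl(K) \cong \Clk$ and the transitive action of $T(\adeles_f)$ on lattices in the genus of $P = \fraka$, I may take $h \cdot P = \frakb\fraka$ for a representative ideal $\frakb$ of $h$, equipped with the rescaled norm form $\N(\cdot)/\N(\frakb\fraka)$. The $\frake_\beta$-component of $\Theta_P(\tau,h)$ then becomes $\sum_{\lambda \in \frakb\fraka + \beta}\e{\N(\lambda)\tau/\N(\frakb\fraka)}$ for an appropriate identification of $(\frakb\fraka)'/(\frakb\fraka)$ with $P'/P$, placing the problem into the familiar setting of dihedral theta series.

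For (i) and (ii), I compute the constant Fourier coefficient of $\Theta_P(\tau,\psi)$. Since $P$ is positive definite and $D$ is fundamental, the only $\lambda \in h(P+\beta)$ with $Q(\lambda) = 0$ is $\lambda = 0$, which forces $\beta = 0 \in P'/P$. Hence the coefficient of $q^0$ equals $\sum_{h \in \Clk} \psi(h)$ in the $\frake_0$-slot and vanishes in every other component; this sum is zero exactly when $\psi \neq 1$. Since $\SL_2(\Z)$ has a single cusp, vanishing of this constant coefficient characterizes cusp forms, proving (ii). For $\psi = 1$, the form $\Theta_P(\tau,1)$ has nonzero constant term and hence lies in the Eisenstein complement of the cusp space; this complement is one-dimensional for fundamental $D$ (the dimension matches that of the corresponding scalar Eisenstein space), so $\Theta_P(\tau,1)$ generates it and may be taken as the definition of $E_P$.

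For (iii), the crucial identity is $\Theta_P(\tau,h) = \Theta_P(\tau,h^{-1})$. This is established by pulling back along complex conjugation in $k_D$: if $\frakb$ represents $h$ then $\bar\frakb$ represents $h^{-1}$, the bijection $\lambda\mapsto\bar\lambda$ preserves the norm form, and after carefully tracking its induced action on $P'/P$ one matches the two theta functions term by term. Summing against $\psi$ yields
\[
  \Theta_P(\tau,\psi) = \sum_h \psi(h) \Theta_P(\tau,h) = \sum_h \psi(h^{-1}) \Theta_P(\tau,h) = \Theta_P(\tau, \overline{\psi}),
\]
so $\calC$ indexes all distinct theta functions of this form, and Fourier inversion on the finite abelian group $\Clk$ shows that the $\Theta_P(\tau,\psi)$ span $\Theta(P)$. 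Orthogonality is then established by interpreting each $\Theta_P(\tau,\psi)$ as a Hecke eigenform: at a prime $p$ unramified in $k_D$ the eigenvalue is $\sum_{\frakp \mid p}\psi(\frakp)$, which separates distinct classes in $\calC$; eigenforms with distinct Hecke systems are Petersson-orthogonal, and the unique Eisenstein series ($\psi = 1$) is automatically orthogonal to every cusp form.

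Finally, (iv) is a character-theoretic count. The involution $\psi \mapsto \overline{\psi}$ on the dual group $\widehat{\Clk}$ has fixed points exactly the characters of order dividing $2$; genus theory for the fundamental discriminant $D$ with $t$ prime divisors gives $|\Clk[2]| = 2^{t-1}$. Combining the $2^{t-1}$ self-conjugate characters with the $(h_k - 2^{t-1})/2$ genuine conjugate pairs gives
\[
    |\calC| = 2^{t-1} + \frac{h_k - 2^{t-1}}{2} = \frac{h_k + 2^{t-1}}{2}.
\]
The step I expect to be the main obstacle is the careful bookkeeping in (iii): verifying cleanly that complex conjugation on $k_D$ (combined with $h\mapsto h^{-1}$) acts as the identity on the full vector-valued theta function, and confirming that the vector-valued Hecke theory used for orthogonality reduces correctly to the classical theory of weight-one CM newforms attached to characters of $\Clk$.
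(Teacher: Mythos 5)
Your parts (ii) and (iv) are fine, but there are two genuine problems. First, in (i): a holomorphic form with nonzero constant term need not be an Eisenstein series --- it only has a nonzero Eisenstein component. Your argument therefore does not show that $\Theta_P(\tau,1)$ lies in the (one-dimensional) Eisenstein subspace of $M_{1,P}$; the paper obtains this from the Siegel--Weil formula (the average of the theta functions over the genus \emph{is} the Eisenstein series), and some such input is unavoidable here. Second, the key identity in your (iii), $\Theta_P(\tau,h)=\Theta_P(\tau,h^{-1})$, is false in general. Complex conjugation sends the lattice $h.\fraka$, which lies in the class $[h]^2[\fraka]$, to a lattice in the class $[h]^{-2}[\fraka]^{-1}=([h][\fraka])^{-2}[\fraka]$, so the conjugate partner of $h$ is $h^{-1}\fraka^{-1}$, not $h^{-1}$; moreover $T(\adeles_f)$ acts on the components $\frake_\beta$ through automorphisms of $P'/P$ (the paper explicitly warns about this), so even the corrected statement requires nontrivial bookkeeping. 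The correct relation, which the paper proves, is the proportionality $\Theta_P(\tau,\bar\psi)=\bar\psi(\fraka)\,\Theta_P(\tau,\psi)$ --- still enough for the spanning argument, but not the equality you assert, and the paper deliberately derives it \emph{from} the inner-product formula (the difference $\bar\psi(\fraka)\Theta_P(\tau,\psi)-\Theta_P(\tau,\bar\psi)$ lies in $\Theta(P)\cap\Theta(P)^{\perp}$, hence vanishes) precisely to avoid the term-by-term matching you flag as the main obstacle.

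For orthogonality you invoke a Hecke eigenform argument; the paper does something quite different and logically prior. It first proves \cref{prop:thetapet}, an explicit formula for $(\Theta_P(\tau,\psi),\Theta_P(\tau,\chi))$ as CM values of the regularized Borcherds lift of the constant function on an $\Og(2,2)$ lattice (via a seesaw identity and the Kronecker-limit-type evaluation \cref{eq:Boeta}), and orthogonality for $\psi\neq\chi,\bar\chi$ drops out of character orthogonality in that formula; linear independence and the conjugation relation come from the same source. Your route would instead require setting up Hecke operators on $M_{1,P}$ for the Weil representation, showing they are normal for the Petersson product, and verifying that each $\Theta_P(\tau,\psi)$ is an eigenform with the dihedral eigenvalue system --- none of which is off-the-shelf in the vector-valued setting, and note that for $\psi^2=1$, $\psi\neq 1$ the form $\Theta_P(\tau,\psi)$ is a \emph{cusp} form here (unlike its scalar-valued counterpart), so the reduction to classical weight-one CM newforms is not immediate. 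As written, (i) and the two pillars of (iii) each have a real gap.
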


\begin{remark}
  Note that the set $\calB(P)$ does depend on the choice of representatives, but only up
  to scalar factors.
\end{remark}

We also give an explicit formula for the Petersson inner products of
these basis elements in terms of special values of Dedekinds $\eta$-function. 
Recall that given an ideal $\frakb$ of $k$ which corresponds to the binary quadratic form $[a,b,c]$,
there is a CM point given by the unique root of the polynomial $a\tau^{2}+b\tau+c$
that lies in $\uhp$. We write $\tau(\frakb) = u(\fraka) + i v(\fraka) \in \uhp$ for this point.
\begin{proposition}
  \label{prop:thetapet}
 Let $\chi$ and $\psi$ be characters of $\Cl(K)$, not both trivial.
  With the assumptions of \cref{thm:thetaPbasis}, the following holds.
  \begin{enumerate}
  \item 
We have
  \[
    (\Theta_P(\tau,\psi),\Theta_P(\tau,\chi)) = 0
  \]
  unless $\psi = \bar{\chi}$ or $\psi = \chi$.
\item If $\psi^{2} \neq 1$ and $\psi = \bar{\chi}$, we obtain
  \[
    (\Theta_P(\tau,\psi),\Theta_P(\tau,\bar{\psi}))
    = - \psi(\fraka) h_k \sum_{\frakb \in \Clk} \psi(\frakb) \log \abs{v(\frakb)\eta^4(\tau(\frakb))},
  \]
\item   and if $\psi^{2} \neq 1$ but $\psi = \chi$,
  we have
  \[
    (\Theta_P(\tau,\psi),\Theta_P(\tau,\psi))
    = - h_k \sum_{\frakb \in \Clk} \psi(\frakb) \log \abs{v(\frakb)\eta^4(\tau(\frakb))}.
  \]
\item If $\psi = \chi$ and $\psi^{2} = \chi^{2} = 1$, the result is the sum of these two expressions.
  \end{enumerate}
\end{proposition}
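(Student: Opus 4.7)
The plan is to reduce the vector-valued Petersson inner product to a sum of Petersson products of classical weight-one CM theta series and then apply Kronecker's limit formula to identify the values $\log|v(\frakb)\eta^4(\tau(\frakb))|$ at the CM points attached to ideal classes $\frakb \in \Clk$.

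First I would expand bilinearly,
\[
(\Theta_P(\tau,\psi),\Theta_P(\tau,\chi)) = \sum_{h_1, h_2 \in \Clk} \psi(h_1)\overline{\chi(h_2)}\, (\Theta_P(\tau,h_1),\Theta_P(\tau,h_2)),
\]
and analyse the individual pair inner product. Each $\beta$-component of $\Theta_P(\tau,h)$ is a classical scalar weight-one theta series attached to the lattice coset $h(P+\beta)$, which under $\Cl(K) \cong \Clk$ corresponds to an ideal class in $k_D$. The pair inner product will depend on $(h_1, h_2)$ only through the ratio $\frakb = h_1 h_2^{-1}$, up to a fixed involution on $P'/P$ induced by complex conjugation $\fraka \mapsto \bar\fraka$. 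Orthogonality of characters on $\Clk$ then forces vanishing unless $\psi$ and $\chi$ are related by this symmetry, i.e.\ unless $\psi \in \{\chi, \bar\chi\}$; this establishes claim (i).

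For the non-vanishing cases I would compute the scalar pair inner product by the Rankin--Selberg method. Introducing the real-analytic Eisenstein series $E(\tau, s) = \sum_{\gamma \in \Gamma_\infty \bs \SL_2(\Z)} \Im(\gamma\tau)^s$ and considering $\int \langle \Theta_P(\cdot, h_1), \overline{\Theta_P(\cdot, h_2)}\rangle E(\tau, s)\, v\, du\, dv/v^2$, unfolding against one of the theta series collapses the integral to a sum over lattice vectors in the relative coset, producing an Epstein zeta function attached to the binary quadratic form associated to $\frakb = h_1 h_2^{-1}$, which admits analytic continuation. Kronecker's limit formula,
\[
\lim_{s \to 1}\Bigl(E(\tau, s) - \tfrac{c}{s-1}\Bigr) = C - \log\bigl|v\,\eta^4(\tau)\bigr|,
\]
evaluated at the CM point $\tau(\frakb)$, produces the asserted term $\log|v(\frakb)\eta^4(\tau(\frakb))|$. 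Summing over $\frakb \in \Clk$ against $\psi$ gives case (iii); the extra $\psi(\fraka)$ factor in case (ii) emerges from the base-point shift needed to translate the $\psi = \bar\chi$ regime back to an expansion centred at $\fraka$, and case (iv) follows because in the degenerate regime $\psi^2 = \chi^2 = 1$ with $\psi = \chi$, both the $\psi = \chi$ and $\psi = \bar\chi$ branches contribute simultaneously and add.

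The hardest part will be the careful regularisation at the weight-one boundary together with the bookkeeping of constants. The Eisenstein series has a simple pole at $s = 1$ and the Rankin--Selberg integral only converges after subtracting it; the hypothesis that $\psi$ and $\chi$ are not both trivial is precisely what guarantees that after summing against a non-trivial character, both the polar contribution and the Kronecker constant $C$ drop out by orthogonality, leaving only the genuine $\log|v\eta^4|$ terms. One must also track the factor $h_k$ arising from collapsing the double class-group sum into a single sum over $\frakb$, the normalisation $\N(x)/\N(\fraka)$ of the quadratic form on $P = \fraka$, and the overall minus sign; the twist $\psi(\fraka)$ in (ii) is the most delicate, as it arises from the identification of base points between the two character regimes and must be tracked through the entire unfolding.
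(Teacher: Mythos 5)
Your outline is essentially correct, but it is a genuinely different route from the one taken in the paper -- indeed it is precisely the classical Rankin--Selberg/Kronecker limit formula approach (as in Duke--Li, Proposition~3.1, for the scalar case) that the author explicitly says he is \emph{not} following. The paper instead proves \cref{prop:thetapet} by a seesaw argument: the lattice $P\oplus P^{-}$ is embedded into the even unimodular lattice $L=M_2(\Z)$ of signature $(2,2)$, the regularized pairing $\Phi_{P}(\Theta_{P}(\tau,g),h)$ is identified with the CM value $\Phi_{L}((z_P,(h,g)),1)$ of the additive Borcherds lift of the constant function, and the known evaluation \cref{eq:Boeta} of that lift in terms of $\eta$ is then quoted (\cref{prop:PhiPeta}); the character sums and orthogonality then proceed exactly as in your sketch, with the $\psi(\fraka)$ twist in (ii) coming from the same base-point shift $h\mapsto h\fraka$ you identify. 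The two methods rest on the same underlying analytic fact -- the Kronecker limit formula is what evaluates $\Phi_L(\cdot,1)$ -- but the seesaw packaging buys two things: it generalizes to higher-dimensional lattices (as the author notes), and it builds the regularization in from the start. That last point is the one place where your write-up needs care: your opening bilinear expansion $\sum_{h_1,h_2}\psi(h_1)\overline{\chi(h_2)}\,(\Theta_P(\tau,h_1),\Theta_P(\tau,h_2))$ is only formal, since neither $\Theta_P(\tau,h_1)$ nor $\Theta_P(\tau,h_2)$ is cuspidal and the individual Petersson integrals diverge; you must either carry the Rankin--Selberg variable $s$ through the entire double sum and only specialize after the character orthogonality has killed the polar and constant terms (which is what your final paragraph gestures at), or replace each pair inner product by a regularized integral $\Phi_P$ as the paper does. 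Also note that what your unfolding actually produces at the CM point is the partial zeta function of an ideal class, so it is the first limit formula applied to $E(\tau(\frakb),s)$ via $\zeta(2s)E(\tau(\frakb),s)\doteq\zeta_k(s,[\frakb])$ that yields $\log\bigl|v(\frakb)\eta^4(\tau(\frakb))\bigr|$; with that made precise, your argument goes through and recovers (i)--(iv).
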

The analogous formula is well known in the scalar valued case (see \cref{cor:scalar-theta-pet-expl}).
However, our proof is very different from the classical one that uses Kronecker's limit formula (see Proposition 3.1 in \cite{dukeliweightone}  for a proof).
We prove the formulas by using a certain seesaw identity and expressing the Petersson inner products
as CM values of a regularized theta lift.
This principle in fact generalizes to arbitrary dimensions, which will be the subject of a sequel to this article.

\section*{Acknowlegdements}
I would like to thank Jan Bruinier for his constant support and helpful comments on an earlier version of this paper.

\section{Shimura varieties for quadratic spaces of type $(2,0)$}
\label{cha:binary}
We let $k = k_D = \Q(\sqrt{D})$ be the imaginary quadratic field
of discriminant $D$ and we write $\Clk$ for the ideal class group of $k$.
We let $\OD \subset k$ be the ring of integers in $k$.
We write $\adeles_{k}$ for the ring of adeles over $k$.
Recall that \emph{idele class group} of $k$ is defined as the quotient
  \[
    \calI_k =  k^{\times} \bs \adeles_{k}^{\times}.
  \]
  Here, $k^{\times}$ is embedded diagonally into $\adeles_{k}^{\times}$ and the elements of
  the subgroup $k^{\times}$ are called \emph{principal ideles}.
  We also write
  \[
    I_{k} = k^{\times} \bs \adeles_{k,f}^{\times}
  \]
  for the finite idele class group.

\begin{theorem}[VI. Satz 1.3, \cite{neukirchalgzt}]
  \label{thm:idck}
  We have a surjective homomorphism
  \[
    I_{k} \rightarrow \Clk, \quad (\alpha_{\frakp})_{\frakp} \mapsto \prod_{\frakp \nmid \infty} \frakp^{v_{\frakp}(\alpha)},
  \]
  inducing an isomorphism
  \[
    I_{k} / \ODhatt = k^{\times} \bs \adeles_{k}^{\times} / \ODhatt \cong \Clk,
  \]
  where $\ODhatt$ is the subgroup
  \[
    \ODhatt = \prod_{\frakp \nmid \infty} \calO_{\frakp}^{\times}.
  \]
\end{theorem}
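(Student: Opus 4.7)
The plan is to exhibit the stated map explicitly, verify that it is a well-defined homomorphism, check surjectivity directly, and then compute its kernel, all via the local-global interplay between valuations on $k$ and $\frakp$-adic completions.

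First, I would define $\phi \colon \adeles_{k,f}^{\times} \to \Clk$ by $(\alpha_{\frakp})_{\frakp} \mapsto [\prod_{\frakp \nmid \infty} \frakp^{v_{\frakp}(\alpha_{\frakp})}]$. The product is a well-defined fractional ideal because, by the definition of a finite idele, $\alpha_{\frakp} \in \calO_{\frakp}^{\times}$ for almost all $\frakp$, so $v_{\frakp}(\alpha_{\frakp}) = 0$ for almost all $\frakp$. Multiplicativity of $v_{\frakp}$ makes $\phi$ a group homomorphism.

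Next, for surjectivity, I would use the fact that every fractional ideal of $\calO_{D}$ factors uniquely as $\fraka = \prod_{\frakp} \frakp^{n_{\frakp}}$ with $n_{\frakp} = 0$ for all but finitely many $\frakp$. Choosing a uniformizer $\pi_{\frakp} \in k_{\frakp}$ for each $\frakp$, the idele with components $\pi_{\frakp}^{n_{\frakp}}$ (and $1$ elsewhere) lies in $\adeles_{k,f}^{\times}$ and maps to $[\fraka]$, so every ideal class is hit.

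The key step is the kernel computation. Suppose $\phi(\alpha) = 1$ in $\Clk$; then $\prod_{\frakp} \frakp^{v_{\frakp}(\alpha_{\frakp})} = (\gamma)$ for some $\gamma \in k^{\times}$. Using $(\gamma) = \prod_{\frakp} \frakp^{v_{\frakp}(\gamma)}$ and the unique factorization of fractional ideals, one obtains $v_{\frakp}(\alpha_{\frakp}) = v_{\frakp}(\gamma)$ for every finite prime $\frakp$, hence $\alpha_{\frakp}/\gamma \in \calO_{\frakp}^{\times}$ for every $\frakp$. This exhibits $\alpha$ as a product of a principal idele and an element of $\ODhatt = \prod_{\frakp \nmid \infty} \calO_{\frakp}^{\times}$. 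Conversely, principal ideles map to the trivial class (as they give principal ideals), and elements of $\ODhatt$ have all valuations zero, so they map to the identity class as well. Hence $\ker \phi = k^{\times} \cdot \ODhatt$, and $\phi$ descends to an isomorphism $k^{\times} \bs \adeles_{k,f}^{\times} / \ODhatt \xrightarrow{\sim} \Clk$.

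The only genuinely delicate point is the kernel step: it relies on the injectivity of the map from fractional ideals to their prime factorizations (i.e.\ unique factorization in the Dedekind domain $\calO_{D}$), so one must ensure that the local data $(v_{\frakp}(\alpha_{\frakp}))_{\frakp}$ reconstructs an element of $k^{\times}$ modulo units correctly. Everything else is essentially bookkeeping between local units and global principal ideles.
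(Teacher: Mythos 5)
Your proof is correct and is precisely the standard argument: the paper itself gives no proof of this statement but cites Neukirch (VI.\ Satz 1.3), and the argument there is the same one you give --- well-definedness via almost-all components being units, surjectivity via uniformizers, and the kernel computation $\ker\phi = k^{\times}\cdot\ODhatt$ via unique factorization of fractional ideals. No gaps; note only that the paper's display $k^{\times}\bs\adeles_{k}^{\times}/\ODhatt$ should read $k^{\times}\bs\adeles_{k,f}^{\times}/\ODhatt$ to match the definition of $I_k$, and your use of the finite ideles is the consistent choice.
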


\subsection{Binary quadratic forms and ideals}
Recall that a fractional ideal $\fraka$ of $k$ defines an integral binary quadratic form
in the following way. If $\fraka$ is generated as a $\Z$-module by two elements
\[
 \fraka = \left( \alpha, \beta \right) = \Z \alpha + \Z \beta,
\]
then
\[
  Q_{\fraka}(x,y) = \frac{\N(\alpha)}{\N(\fraka)}x^2
     + \frac{\tr(\alpha\bar{\beta})}{\N(\fraka)}xy + \frac{\N(\beta)}{\N(\fraka)}y^2
     = \frac{\N(x\alpha+y\beta)}{\N(\fraka)}
\]
is an integral binary quadratic form of discriminant $D$.
This induces a bijective correspondence between equivalence classes of
positive definite integral binary quadratic forms of discriminant $D$ and the class group $\Clk$
of $k$ (if we also restrict to oriented bases).
A good reference for this correspondence is \cite{zagier-qf}.

\subsection{The action of $\GSpin_U(\adeles_f)$}
\label{sec:GspinU}
In this section, we let $U = \fraka \otimes_{\Z} \Q$
for a fractional ideal $\fraka$ of $k$ and we view $U$ simply as a 2-dimensional
rational quadratic space with quadratic form $Q(x) = \N(x)/\N(\fraka)$.
We write $T = \GSpin_{U}$. Over $\Q$ we have that $C_U^0 \cong \Q(\sqrt{-\abs{\det(U)}})$,
the even part of the Clifford algebra, is isomorphic to $k$.
The Clifford norm corresponds to the norm $\N(x)=x\bar{x}$ in $k$.
Here, $\bar{x}$ denotes complex conjugation.
Moreover, the group $SO_U(\Q)$ is isomorphic to
\[
  k^1 = \{ x \in k\, \mid\, \N(x)=1\}
\]
and $T(\Q) \cong k^{\times}$ is the multiplicative group of $k$.

Under this identification the map $T(\Q) \mapsto \SO_{U}(\Q)$ is given by $x \mapsto x/\bar{x}$.
This is essentially Hilbert's theorem 90 but can also be seen directly
by a short calculation using the definition of the Clifford group.
To see this, we consider the orthogonal basis $\{v_1=\N(\fraka), v_2 = -\sqrt{D}\}$ of $k$
as a vector space over $\Q$, where $D$ is the discriminant of $k$.
We have $Q(v_{1}) = \N(\fraka)$ and $Q(v_{2}) = \N(\sqrt{D})/\N(\fraka) = -D/\N(\fraka)$.

The even Clifford algebra $C_U^0$ is generated (as a $\Q$-algebra)
by $1$ and $\delta = v_{1}v_{2}$. Note that $\delta^2 = D$.

The group $\GSpin_{U}$ is given by all non-zero elements in $C_{U}^{0}$ in our case.
By definition, an element $a+b\delta \in \GSpin_{U}$ acts on $x \in k = U$ via
\[
  (a+b\delta) \cdot x \cdot (a+b\delta)^{-1}
\]
where the multiplication is in the Clifford algebra $C_{U}$.
It is enough to compute this on the basis vectors $v_{1},v_{2}$ of $k$.
It is easy to see that
$\delta v_{j}^{-1} = - v_{j}^{-1} \delta$ and we obtain
\[
  (a+b\delta) \cdot v_{j} \cdot (a+b\delta)^{-1}
  = (a+b\delta) \cdot ((a+b\delta)v_j^{-1})^{-1}
  = (a+b\delta) \cdot (a-b\delta)^{-1} \cdot v_{j}.
\]
The element $x=(a+b\delta) \cdot (a-b\delta)^{-1}$ is contained in $k^{\times}$.
The isomorphism $k^{\times} \cong T(\Q)$ is explicitly given via
$a + b \sqrt{D} \mapsto a+b\delta$.
Under this identification, the action of $x \in k^{\times} \cong \GSpin_{U}$
on $k$ is given by multiplication with $x/\bar{x}$.

Using this, we see that
$T(\adeles_{f}) \cong \adeles_{k,f}^{\times}$ is isomorphic to the multiplicative
group of ideles over $k$. To avoid confusion, in this section we write $h.x$ for the action of
$h \in T(\adeles_{f})$ on $x$ and simply $hx$ for multiplication of adeles.
Recall that the group $T(\adeles_{f}) = \GSpin_{U}(\adeles_{f})$ acts on lattices in $U$.
If $h = (h_p)_p \in \GSpin_{U}(\adeles_{f})$ and $L = \hat L \cap V(\Q)$ is a lattice in $V$, then $h.L = (h. \hat L) \cap V(\Q) = \prod_{p} (h_p. L_p)_p \cap U(\Q)$.

In the following, we will examine the action of $T(\adeles_{f})$ on
lattices in $U$ more closely. It is important to note that this action is
different form the ``natural'' action on ideals (or lattices) in $k$.
Recall that this natural action is simply given by the linear action of
$\Q_{p}^{\times}$ on $k \otimes \Q_{p}$.

The $\Q_{p}$ vector space $k \otimes_{\Q} \Q_p$ is an algebra with the multiplication
$(a \otimes b)(c \otimes d) = ac \otimes bd$, isomorphic to $C_{U}^{0}(\Q_{p})$.
It is also isomorphic \cite[II, Theorem 8.3]{neukirchalgzt} to the product
\begin{equation}
  \label{eq:12}
  \prod_{\frakp \mid p} k_{\frakp},
\end{equation}
where the product is over all prime ideals $\frakp$ of $k$ that lie above $p$.

For our purposes, it is enough to consider a lattice given by
a fractional ideal $\fraka \subset k$.
Then the action of $x \in T(\adeles_{f})$ is given as follows.

We write $\pi_{\frakp} \in \calO_{\frakp}$ for a uniformizer in $\calO_{\frakp}$.
This means that the only prime ideal in $\calO_{\frakp}$ is generated by $\pi_{\frakp}$
and every element in $k_{\frakp}$ can be written as $\pi_{\frakp}^{m}u$, where
$m \in \Z$ and $u \in \calO_{\frakp}^{\times}$. We can write
\[
\fraka = \prod_{\frakp} \frakp^{v_{\frakp}(\fraka)}
       = (\pi_{\frakp}^{v_{\frakp}(\fraka)})_{\frakp} \cap k,
\]
where we view $k$ as diagonally embedded into $\adeles_{k,f}$.

\begin{lemma}
  Let $h = (h_{\frakp})_{\frakp} \in T(\adeles_f)$. Then we have
\begin{equation*}
  \label{eq:ha}
  h.\fraka = \prod_{\frakp} \frakp^{v_{\frakp}(\fraka)+\mu_{\frakp}(h)},
\end{equation*}
where
\[
  \mu_{\frakp}(h) =
  \begin{cases}
    0, &\text{if } \frakp = \bar\frakp\\
    v_{\frakp}(h_{\frakp})-v_{\bar\frakp}(h_{\bar\frakp}), &\text{otherwise}.
  \end{cases}
\]
\end{lemma}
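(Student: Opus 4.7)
The plan is to reduce the problem to a local computation at each rational prime $p$. By the discussion preceding the lemma, under the identification $T(\Q) \cong k^\times$ the action of $x \in T(\Q)$ on $u \in U = k$ is multiplication by $x/\bar{x}$, where the bar denotes the Galois conjugation of $k/\Q$. The same formula therefore holds adelically: for $h = (h_\frakp)_\frakp \in T(\adeles_f)$ and the local factor $\fraka_p \subset k \otimes_\Q \Q_p$, the action is by multiplication by $h_p/\bar{h}_p$, where $h_p \in (k \otimes_\Q \Q_p)^\times$ is the component at $p$. Since $h.\fraka$ is determined by its localizations, it suffices to compute $v_\frakp\bigl((h_p/\bar{h}_p) \cdot \fraka_p\bigr)$ for every prime $\frakp \mid p$.

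I then pin down how conjugation acts on the product decomposition $k \otimes_\Q \Q_p \cong \prod_{\frakp \mid p} k_\frakp$. The isomorphism sends $\alpha \otimes 1$ to $(i_\frakp(\alpha))_{\frakp \mid p}$, where $i_\frakp\colon k \hookrightarrow k_\frakp$ is the natural inclusion. The nontrivial automorphism $\sigma$ of $k/\Q$ permutes the primes above $p$ by sending $\frakp$ to $\bar\frakp$, so on the right hand side it either swaps the $\frakp$- and $\bar\frakp$-factors (split case) or restricts to the nontrivial automorphism of $k_\frakp/\Q_p$ (inert or ramified case).

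Two cases remain. If $\frakp = \bar\frakp$, conjugation is a field automorphism of $k_\frakp$ and hence preserves the normalized valuation, so $v_\frakp(h_p/\bar{h}_p) = v_\frakp(h_p) - v_\frakp(\bar{h}_p) = 0$, and $\fraka_p$ is scaled by a unit at $\frakp$. If instead $\frakp \neq \bar\frakp$, writing $h_p = (h_\frakp, h_{\bar\frakp}) \in k_\frakp^\times \times k_{\bar\frakp}^\times$ and using $i_\frakp \circ \sigma = i_{\bar\frakp}$ gives $\bar{h}_p = (h_{\bar\frakp}, h_\frakp)$ after the canonical identification. Consequently $h_p/\bar{h}_p$ has $\frakp$-component of valuation $v_\frakp(h_\frakp) - v_{\bar\frakp}(h_{\bar\frakp}) = \mu_\frakp(h)$, and likewise for $\bar\frakp$. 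Combining with the factorization $\fraka = \prod_\frakp \frakp^{v_\frakp(\fraka)}$ yields the stated formula.

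The main obstacle is purely bookkeeping: one must verify precisely how the Galois action is transported across the isomorphism $k \otimes_\Q \Q_p \cong \prod_{\frakp \mid p} k_\frakp$ so that the terms $v_\frakp(h_\frakp)$ and $v_{\bar\frakp}(h_{\bar\frakp})$ appear with the correct signs. Once the identification $i_\frakp \circ \sigma = i_{\bar\frakp}$ is set up cleanly, everything else is a direct $\frakp$-adic valuation computation.
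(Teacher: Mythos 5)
Your proposal is correct and follows essentially the same route as the paper: reduce to the local action by multiplication with $h_p/\bar h_p$, observe that conjugation preserves $v_\frakp$ when $\frakp=\bar\frakp$, and that it swaps the two factors of $k\otimes_\Q\Q_p\cong k_\frakp\times k_{\bar\frakp}$ in the split case. The only cosmetic difference is that the paper makes the swap explicit via a square root $\frakd$ of $D$ in $\Q_p$ and the coordinates $(a+b\frakd,a-b\frakd)$, while you phrase it abstractly through $i_\frakp\circ\sigma=i_{\bar\frakp}$; both yield the same valuation computation.
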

\begin{proof}
For primes $\frakp$ with $\frakp = \bar\frakp$,
that is for inert and ramified primes,
the action of $T(\Q_{p})$ does not change the valuation $v_{\frakp}$.
In those cases $h_{p} \in T(\Q_{p}) \cong k_{\frakp}^{\times}$ acts by multiplication with
$h_{p}/\bar{h}_{p}$, where $\bar{h}_{p}$ denotes the image of $h_{p}$
under the non-trivial Galois automorphism of the extension $k_{\frakp}/\Q_{p}$.

If the rational prime $p$ however splits in $k$
as $p\OD=\frakp \bar{\frakp}$, the action is
necessarily slightly different.
We have
\[
  k \otimes \Q_{p} \cong k_{\frakp} \times k_{\bar{\frakp}} \cong \Q_{p}^{2},
\]
as in \cref{eq:12}.
The isomorphism is given explicitly as follows.
Let $\frakd \in \Q_{p}$ with $\frakd^{2} = D$. Such a square-root exists
because $p$ is split in $k$ and therefore $D$ is a square modulo $p$.
Then the isomorphism  $k \otimes \Q_{p} \cong \Q_{p}^{2}$ is realized by
\[
  (a + b \sqrt{D}) \otimes c \mapsto ((a+b\frakd)c, (a-b\frakd)c) \in \Q_{p}^{2}.
\]
Therefore,
\[
  T(\Q_{p}) \cong (k \otimes \Q_{p})^{\times}
      \cong k_{\frakp}^{\times} \times k_{\bar\frakp}^{\times}
      \cong \Q_{p}^{\times} \times \Q_{p}^{\times}.
\]
Using the same arguments as over $\Q$, we see that
an element $x \otimes c \in T(\Q_p)$
acts by multiplication with $x/\bar{x} \otimes 1$.
Therefore, if $x = a + b\sqrt{D}$, then this corresponds to
multiplication with
\[
  \left(\frac{a+b\frakd}{a-b\frakd}, \frac{a-b\frakd}{a+b\frakd} \right),
\]
giving the formula in the lemma.
\end{proof}

In particular, we see that the action of $T(\adeles_{f})$ on lattices in $U$
is really fundamentally different from multiplication in the class group.
We denote the class of $h$ under the surjective map $\adeles_{k,f}^{\times} \rightarrow \Clk$
in \cref{thm:idck} by $[h]$. From the formulas above, we see that the action of $T(\adeles_{f})$
on the class $[\fraka]$ of $\fraka$ corresponds to multiplication by the class $[h]/\overline{[h]}$.
Here, $\overline{[h]}$ denotes the complex conjugate class of $[h]$.
Note that $[h]/\overline{[h]} = [h]^2$ since in an imaginary quadratic field
the ideal $\frakp\bar\frakp$ is a principal ideal for all prime ideals $\frakp \subset \OD$.
(It is either generated by $p=\N(\frakp)$ or by $p^{2}$.)

Therefore, the class $[h.\fraka] \in \Clk$ is given by $[h.\fraka] = [h]^2 [\fraka]$,
in accordance with the fact that $\GSpin_{U}(\adeles_{f})$ acts on lattices in the same genus.

\section{Petersson inner products of theta functions}
\label{sec:more-pp}
\subsection{Regularized theta lifts}
\label{sec:regul-theta-lifts}
We briefly recall Borcherds' regularized theta lift \cite{boautgra}.
We refer to the literature for details \cite{kudla-integrals, boautgra, brhabil, ehlen-diss}.

Let $V$ be a rational quadratic space with quadratic form $Q$ of signature $(b^+, b^-)$ and let $H=\GSpin_V$.
We write $(x,y) = Q(x+y)-Q(x)-Q(y)$ for the associated bilinear form.
Let $L \subset V$ be an even lattice and denote by $\Theta_L(\tau,z,h)$ the Siegel theta function associated with $L$.
For an appropriate choice of an open compact subgroup $K \subset H(\adeles_f)$, it is a function in $(z,h)$ on
the Shimura variety with complex points
\[
X_K(\C) = H(\Q) \bs ( \domain \times H(\adeles_f)/K),
\]
where $\domain$ is the symmetric space attached to $V$.
Assume that the signature $b^+-b^-$ is even. 
Recall that $\SL_2(\Z)$ is generated by
\[
S = \Smatrix \quad \text{and} \quad T = \Tmatrix.
\]
There is a unitary representation $\rho_L$ of $\SL_2(\Z)$
on the group ring $\C[L'/L]$, called the Weil representation.
The action of $\rho_L$ is defined as follows:
\begin{align*}
  \rho_L(T)\frake_\mu &= e(Q(\mu)) \frake_\mu,\\
  \rho_A(S)\frake_\mu &= \frac{ e((b^+ - b^-)/8)}{\sqrt{|L'/L|}}
  \sum_{\nu \in L'/L} e(-(\mu,\nu)) \frake_\nu.
\end{align*}
We write $M_{k,L}$ for the complex vector space of modular forms
of weight $k$ and representation $\rho_L$. Moreover, cusp forms are denoted $S_{k,L}$ and weakly holomorphic modular forms (which are allowed to have a pole at the cusp at $\infty$) by $M^!_{k,L}$.
Put $k=b^+-b^-$.
As a function of $\tau$, the theta function $\Theta_L(\tau,z,h)$ is a vector valued (non-holomorphic unless $L$ is positive definite) modular form of weight $k$ and representation $\rho_L$.

Denote by $\calF := \{\tau \in \uhp; \abs{\tau} \geq 1,\, -1/2 \leq \Re(\tau) \leq 1/2 \}$ the standard fundamental domain for the action of $\SL_2(\Z)$ and let $\calF_{T}:=\{\tau \in \calF;\ \Im(\tau) \leq T\}$.
Here and throughout, we write $d\mu(\tau) = dudv/v^2$ for $\tau = u+iv \in uhp$.
For a vector valued modular form $f \in M^!_{k,L}$, let
\[
\Phi_L(z,h,f) = 
\int_{\Gamma \backslash \uhp}^{\reg}
   \langle f(\tau), \overline{\Theta_L(\tau,z,h)} \rangle v^k d\mu(\tau)
  := \CT_{s=0} \left[
  \lim_{T\rightarrow \infty}
   \int_{\calF_T} \langle f(\tau), \overline{\Theta_L(\tau,z,h)} \rangle v^{k-s} d\mu(\tau) \right].
\]
Here, $\CT\limits_{s=0}$ denotes the constant term in the Laurent expansion at $s=0$
of the meromorphic continuation of the function in brackets defined by the limit.

\subsection{Special values of a theta lift and inner products}
We will now obtain an explicit expression for the Petersson inner products of the cusp forms contained in $\Theta(P)$.
We will utilize a seesaw identity that relates these inner products to special values of the Borcherds lift for $\Og(2,2)$.

Suppose that we are given a lattice $P$ of signature $(2,0)$ that corresponds
to the integral binary quadratic form $[A,B,C]$ of negative
fundamental discriminant $D \equiv 1 \bmod{4}$.
Equivalently, $P$ corresponds to an integral ideal $\fraka \subset \OD$
generated by $A$ and $(B+\sqrt{D})/2$.
Here, $\OD \subset k$ is the ring of integers in $k = \Q(\sqrt{D})$.

The lattice $P \oplus P^{-}$ has type $(2,2)$ and level $\abs{D}$.
We write $P^{-}$ for the lattice given by $P$ together with the negative of the quadratic form.
The discriminant group has order $D^{2}$. We take a $\Z$-basis $\{p_1,p_2\}$ of $P$
with $Q(p_1)=A$, $Q(p_2)=C$ and bilinear form $(p_1,p_2) = B$. We use the same basis for
$P^{-}$. The starting point is the following embedding.

Consider the even unimodular lattice $L=M_2(\Z)$ with the quadratic form given by $Q(X)=-\det(X)$.
The bilinear form is
\[
  (X,Y) = -\tr(XY^{\ast}), \text{ where } \twomat{a}{b}{c}{d}^{\ast} = \twomat{d}{-b}{-c}{a}
\]
and the type of $L$ is $(2,2)$.

The symmetric domain $\domain$ attached to $H = \GSpin_{V}$
can be identified with $\uhp^{2} \cup \bar\uhp^{2}$ in this case via
\begin{equation}
  \label{eq:h2isom}
  (z_1,z_2) \mapsto \R \Re \twomat{z_{1}}{-z_{1}z_{2}}{1}{-z_{2}} \oplus \R \Im \twomat{z_{1}}{-z_{1}z_{2}}{1}{-z_{2}}.
\end{equation}
\begin{lemma}
  Under this identification, the group $H=\GSpin_{V}$ for $V = L \otimes \Q$ can be identified
  with the subgroup $G$ of $\GL_2 \times \GL_2$ defined by
  \[
    G = \{ (g_1,g_2) \in \GL_2 \times \GL_2\ \mid\ \det g_1 = \det g_2 \},
  \]
  which acts on $\domain$ via fractional linear transformations in both components.
  The corresponding action of $(g_1,g_2) \in H$ on $x \in M_2(\Q)$ is given by
  \[
    (g_1,g_2).x = g_1xg_2^{-1}.
  \]
\end{lemma}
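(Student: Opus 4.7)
The plan is to identify $H = \GSpin_V$ with $G$ via an explicit description of the Clifford algebra of $(V, Q)$. First, I embed $V = M_2(\Q)$ into $M_4(\Q) = M_2(M_2(\Q))$ by
\[
  \iota: X \longmapsto \twomat{0}{X}{-X^{\ast}}{0},
\]
and compute
\[
  \iota(X)^2 = \twomat{-XX^{\ast}}{0}{0}{-X^{\ast}X} = -\det(X)\, I_4 = Q(X)\, I_4,
\]
using $XX^{\ast} = X^{\ast}X = \det(X) I_2$. By the universal property of the Clifford algebra, $\iota$ extends to a $\Q$-algebra homomorphism $C_V \to M_4(\Q)$, and a dimension count ($\dim C_V = 2^4 = 16 = \dim M_4(\Q)$) shows it is an isomorphism. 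Since any product $\iota(X)\iota(Y)$ is block-diagonal, the even part $C_V^0$ is sent isomorphically onto the block-diagonal subalgebra $M_2(\Q) \times M_2(\Q) \subset M_4(\Q)$.

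Next, to identify $\GSpin_V$ --- the group of invertible $g \in C_V^0$ whose conjugation action stabilises $V$ --- I will compute directly
\[
  \twomat{g_1}{0}{0}{g_2}\twomat{0}{X}{-X^{\ast}}{0}\twomat{g_1^{-1}}{0}{0}{g_2^{-1}}
  = \twomat{0}{g_1 X g_2^{-1}}{-g_2 X^{\ast} g_1^{-1}}{0}.
\]
For the result to lie in $\iota(V)$, we need $(g_1 X g_2^{-1})^{\ast} = g_2 X^{\ast} g_1^{-1}$ for all $X$; using $A^{\ast} = \det(A)\, A^{-1}$ and $(AB)^{\ast} = B^{\ast} A^{\ast}$, this collapses to the single condition $\det g_1 = \det g_2$. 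Hence $\GSpin_V = G$, and the induced orthogonal action on $V$ is $(g_1, g_2).X = g_1 X g_2^{-1}$ as claimed.

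Finally, to verify the action on $\domain$, I will exploit the rank-one factorisation $X(z_1,z_2) = \colvec{z_1}{1}\begin{pmatrix}1 & -z_2\end{pmatrix}$ of the matrix appearing in \eqref{eq:h2isom}. A short computation shows
\[
  g_1 \colvec{z_1}{1} = (c_1 z_1 + d_1)\colvec{g_1.z_1}{1}, \qquad
  \begin{pmatrix}1 & -z_2\end{pmatrix} g_2^{-1} = \frac{c_2 z_2 + d_2}{\det g_2}\begin{pmatrix}1 & -g_2.z_2\end{pmatrix},
\]
so $g_1 X(z_1,z_2) g_2^{-1}$ is a nonzero complex scalar multiple of $X(g_1.z_1, g_2.z_2)$. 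Since multiplication by a nonzero complex scalar preserves the real linear span of the real and imaginary parts of a complex matrix, the positive 2-plane is sent to the one associated with $(g_1.z_1, g_2.z_2)$, confirming fractional linear action in both components.

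I expect the first paragraph to be the main obstacle: one must choose the correct embedding $\iota$ and verify the exceptional isomorphism $C_V^0 \cong M_2(\Q) \times M_2(\Q)$ by hand. Once that identification is secured, the remainder reduces to two short and essentially routine matrix calculations.
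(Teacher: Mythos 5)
Your argument is correct, but it takes a genuinely different route from the paper. The paper works inside the even Clifford algebra directly: it takes the orthogonal basis $v_0=I_2$, $v_1,v_2,v_3$, quotes the structure of $C_V^0$ (center $\Q\oplus\Q$, spanned over it by the $v_iv_j$) from Example 2.10 of \cite{123}, writes down the isomorphism $C_V^0\cong M_2(\Q)\oplus M_2(\Q)$ on those generators via $v_iv_j\mapsto (v_iv_j^{\ast},v_iv_j^{\ast})$, and then identifies $\GSpin_V$ as $G$ by computing that the Clifford norm becomes $(\det A\cdot I_2,\det B\cdot I_2)$ and imposing $\N(A,B)\in\Q^{\times}$; the fractional linear action is left as a routine check. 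You instead realize the \emph{full} Clifford algebra as $M_4(\Q)$ via $X\mapsto\left(\begin{smallmatrix}0&X\\-X^{\ast}&0\end{smallmatrix}\right)$, read off $C_V^0$ as the block-diagonal part, and characterize $\GSpin_V$ by the defining condition that conjugation stabilize $V$, which you reduce by hand to $\det g_1=\det g_2$. Your route buys self-containedness (no appeal to \cite{123}, and you get the action $g_1Xg_2^{-1}$ and the determinant condition from one computation) and it makes the stabilization of $V$ explicit, whereas the paper implicitly uses that for the even Clifford group the scalar-norm condition suffices; the cost is that you must justify that the extension $C_V\to M_4(\Q)$ is an isomorphism, where ``a dimension count'' alone is not quite enough --- you also need injectivity, e.g.\ from simplicity of $C_V$ (the form is $H\perp H$, so $C_V$ is central simple of dimension $16$), or a direct check of surjectivity. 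Your verification of the action on $\domain$ via the rank-one factorization $\colvec{z_1}{1}\begin{pmatrix}1&-z_2\end{pmatrix}$ is also more explicit than the paper's one-line assertion and correctly accounts for the scalar $(c_1z_1+d_1)(c_2z_2+d_2)/\det g_2$ not affecting the oriented plane.
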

\begin{proof}
  Consider the orthogonal basis
  \[
    v_0 = \twomat{1}{0}{0}{1} = I_2,\ v_1 = \twomat{1}{0}{0}{-1},\ 
    v_2 = \twomat{0}{1}{1}{0},\ v_3 = \twomat{0}{1}{-1}{0}.
  \]
  According to Example 2.10 in the second contribution to \cite{123}, we have that
  the center $Z(C_{V}^{0})$ of the even Clifford algebra is given by $\Q \oplus \Q$
  and
  \[
    C_V^{0} = Z + Zv_1v_2 + Zv_2v_3 + Zv_1v_3.
  \]
  We obtain an isomorphism
  \[
    C_V^0 \cong  M_2(\Q) \oplus M_2(\Q)
  \]
  via
  \[
    1 \mapsto (I_2,I_2), \quad v_{i}v_{j} \mapsto (v_iv_j^{*}, v_iv_j^{*}).
  \]
  Under this isomorphism, the canonical involution of $C_{V}$ corresponds
  to
  \[
    (A,B) \mapsto (A^{*},B^{*})
  \]
  and the Clifford norm is given by
  \[
    \N(A,B) = (\det(A)I_2,\det(B)I_2).
  \]
  Therefore, $\N(A,B) \in \Q^{\times}$
  is equivalent to $A,B \in \GL_2(\Q)$ with $\det(A) = \det(B)$.

  It is straifghtforward to check that under the identification \cref{eq:h2isom},
  the action of $H$ corresponds to fractional linear transformations.
\end{proof}
We let $K = H(\Zhat)$, that is
\[
  K = H(\Zhat)
  = \{ (g_1,g_2) \in \GL_2(\Zhat) \times \GL_2(\Zhat)\ \mid\ \det g_1 = \det g_2 \in \Zhat \}.
\]
It is clear that $K$ preserves $L$.
By strong approximation and the theory of Shimura varieties,
the associated Shimura variety $X_K$ is a product of two modular curves
\[
  X_{K} = H(\Q) \bs \domain \times H(\adeles_f) / K \cong \SL_{2}(\Z) \bs \uhp \times \SL_{2}(\Z) \bs \uhp.
\]
It turns out that the additive Borcherds lift of the constant function is equal to
\begin{equation}
  \label{eq:Boeta}
  \Phi_{L}(z_{1},z_{2},1) = - 4 \log\abs{(y_{1}y_{2})^{1/4}\eta(z_{1})\eta(z_{2})} -\log(2\pi) - \Gamma'(1)
\end{equation}
as a function on $\uhp^{2}$.
We refer to Section 5.1 of the thesis of Hofmann \cite{ericdiss} for details.

Consider the point
\[
  z_0 = \left( \frac{-B+\sqrt{D}}{2A}, \frac{-B+\sqrt{D}}{2} \right) \in \uhp^2.
\]
It corresponds to the two rational points $z_{P}^{\pm} \in \domain$, as we shall see below.
For simplicity, we drop the sign $\pm$ indicating the orientation from our notation.

\renewcommand{\arraystretch}{1.3}
A basis of $z_{0} \cap V(\Q)$ is given by
\[
  \Q f_1 \oplus \Q f_2, \text{ with } f_1 = \twomat{-1}{-B}{0}{A} \text{ and } f_2 = \twomat{0}{-\frac{B^{2}-D}{4A}}{-1}{0}.
\]
Indeed, we have
\[
  z_{0} = \R
  \begin{pmatrix}
    \frac{-B}{2A} & -\frac{B^2+D}{4A} \\
    1            & \frac{B}{2}
  \end{pmatrix}
          \oplus \R
  \sqrt{\abs{D}}
  \begin{pmatrix}
           \frac{1}{2A} & \frac{B}{2A} \\
           0                         & -\frac{1}{2}
  \end{pmatrix}.
\]
We obtain $f_1$ as
\[
  f_1 = -2A \begin{pmatrix}
          \frac{1}{2A} &    \frac{B}{2A} \\
          0                         & -\frac{1}{2}
        \end{pmatrix}.
\]
and
\[
  f_2 = -\begin{pmatrix}
          \frac{-B}{2A} & -\frac{B^2+D}{4A} \\
          1            & \frac{B}{2}
        \end{pmatrix}
        - B
       \begin{pmatrix}
          \frac{1}{2A} & \frac{B}{2A} \\
          0                         & -\frac{1}{2}
        \end{pmatrix}.
\]
In fact, with this choice of basis, we get an isometry of even lattices.
\begin{lemma}
  We have an isometry of lattices $(P,Q) \cong \Z f_1 \oplus \Z f_2 \subset L$
  given by
  \[
    p_1 \mapsto f_1, \quad p_2 \mapsto f_2,
  \]
  or, equivalently of $(\fraka, \N(x)/\N(\fraka)) \cong (P,Q)$ given by
  \[
    A \mapsto f_1, \quad \frac{B+\sqrt{D}}{2} \mapsto f_2.
  \]
  Moreover, we have for $U = \Q f_1 \oplus \Q f_2$ that
  $L \cap U = \Z f_1 \oplus \Z f_2 = P$ and 
  \[
    L \cap U^{\perp} = \Z
      \begin{pmatrix}
        1 & 0 \\ 0 & A
      \end{pmatrix}
      \oplus
      \Z
      \begin{pmatrix}
        0 & -\frac{B^{2}-D}{4A} \\ 1 & B 
      \end{pmatrix}
  \]
  is isometric to $P^{-}$.
\end{lemma}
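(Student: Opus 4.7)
The plan is to prove the lemma by a sequence of direct matrix computations, working throughout with the quadratic form $Q(X)=-\det(X)$ and the associated bilinear form $(X,Y)=-\tr(XY^{\ast})$ on $L=M_{2}(\Z)$. At heart this is a bookkeeping exercise; the discriminant identity $B^{2}-4AC=D$ enters at every step, and the only real pitfall is keeping the sign conventions consistent.

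First I would verify the isometry $(P,Q)\cong \Z f_{1}\oplus \Z f_{2}$ by computing the Gram matrix of $\{f_{1},f_{2}\}$. A direct calculation gives $Q(f_{1})=-\det f_{1}=A$ and $Q(f_{2})=-\det f_{2}=(B^{2}-D)/(4A)$, which equals $C$ by the discriminant relation. A similar matrix calculation yields $(f_{1},f_{2})=B$. Since this is precisely the Gram matrix of $[A,B,C]$, the map $p_{i}\mapsto f_{i}$ is an isometry. The second description, on $\fraka$, then follows from the standard correspondence between integral binary quadratic forms and ideals recalled in Section 2, applied to the ordered $\Z$-basis $\{A,(B+\sqrt{D})/2\}$ of $\fraka$ whose associated form is exactly $[A,B,C]$.

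To establish $L\cap U=\Z f_{1}\oplus \Z f_{2}$, I would observe that any element of $U$ is of the form $\alpha f_{1}+\beta f_{2}$ with $\alpha,\beta\in \Q$. Writing out the matrix entries, the top-left entry is $-\alpha$ and the bottom-left entry is $-\beta$, so integrality forces $\alpha,\beta\in\Z$ and then the remaining two entries are integral automatically.

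The main step, and the slightly more involved one, is determining $L\cap U^{\perp}$. I would parametrize a generic $X=\smallabcd\in L\otimes \Q$ and impose $(f_{1},X)=0$ and $(f_{2},X)=0$. Expanding $-\tr(f_{i}X^{\ast})$ should simplify (once more using $C=(B^{2}-D)/(4A)$) to the two linear conditions $d=Aa+Bc$ and $b=-Cc$. The resulting two-parameter family is exactly $\Q e_{1}\oplus \Q e_{2}$ with $e_{1},e_{2}$ the matrices in the statement, and integrality of $X$ is equivalent to $a,c\in\Z$, so $\{e_{1},e_{2}\}$ is a $\Z$-basis of $L\cap U^{\perp}$. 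A final computation of $Q(e_{1})=-A$, $Q(e_{2})=-C$, and $(e_{1},e_{2})=-B$ identifies the Gram matrix with $-[A,B,C]$, giving the claimed isometry with $P^{-}$. The one thing I would double-check carefully is the definition of the star operation, since a single sign error there propagates through all of the pairings.
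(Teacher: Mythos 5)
Your proposal is correct and follows essentially the same route as the paper: verify the Gram matrices of $\{f_1,f_2\}$ and of the two orthogonal-complement generators via $Q(X)=-\det(X)$ and $(X,Y)=-\tr(XY^{\ast})$, and deduce the lattice equalities from the fact that a non-integral rational combination of these matrices has a non-integral entry. The only cosmetic difference is that you derive $L\cap U^{\perp}$ by solving the two linear conditions $d=Aa+Bc$, $b=-Cc$ from scratch, whereas the paper simply exhibits $\tilde f_1,\tilde f_2$ and checks orthogonality; both computations agree.
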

\begin{proof}
  It is trivial to check that $Q(f_1) = A$, $Q(f_2) = (B^2-D)/4A$ and $(f_1,f_2)=B$.
  Similarly, the matrices
  \[
    \tilde{f}_{1} =
     \begin{pmatrix}
        1 & 0 \\ 0 & A
      \end{pmatrix}, \quad
    \tilde{f}_{2} =
    \begin{pmatrix}
        0 & -\frac{B^{2}-D}{4A} \\ 1 & B 
    \end{pmatrix}
  \]
  satisfy  $Q(\tilde{f}_1) = -A$, $Q(\tilde{f}_2) = -(B^2-D)/4A$ and $(\tilde{f}_1,\tilde{f}_2) = -B$.
  Moreover, $f_1$ and $f_2$ are both orthogonal to $\tilde{f}_1$ and $\tilde{f}_2$.

  As for the equalities $L \cap U = \Z f_1 \oplus \Z f_2$ and $L \cap U^{\perp} = \Z \tilde{f}_1 \oplus \Z \tilde{f}_2$,
  the inclusions ``$\subset$" are clear and the other direction is easy to see
  because any non-integral linear combination of these vectors has a non-integral entry.
\end{proof}
The lemma provides an embedding of $P \oplus P^{-}$ into $L$ as an orthogonal sum.
Under this embedding, $z_P = z_U = z_0$.
We write $T = \GSpin_{U}$ and identify it with $k^{\times}$ as an algebraic group
over $\Q$, as before. We now come to the corresponding embedding
on the level of orthogonal groups.
Note that we have $K_T := K \cap T(\adeles_f) \cong \ODhatt$.
Recall that given an ideal $\frakb$ of $k$ which corresponds to the binary quadratic form $[a,b,c]$,
there is a CM point given by the unique root of the polynomial $a\tau^{2}+b\tau+c$
that lies in $\uhp$.  We write $\tau(\frakb) = u(\fraka) + i v(\fraka) \in \uhp$ with $u(\fraka), v(\fraka) \in \R$ for this point.
\begin{lemma}
  \label{lem:TTembed}
  The group $T = \GSpin_{U}$ embeds into $G$ via
  \[
    1 \mapsto \left( I_2, I_2 \right),
  \]
  where $I_2 \in \GL_2$ is the identity matrix
  and
  \[
    \sqrt{D} \mapsto \left( X,Y \right), \text{ where } X =
    \begin{pmatrix}
      -B & \frac{D-B^{2}}{2A}\\ 2A & B
    \end{pmatrix}
    \text{ and }
     Y = \begin{pmatrix}
      -B & \frac{D-B^{2}}{2}\\ 2 & B
    \end{pmatrix}.
  \]
  Similarly, the image of $T^{-}=\GSpin_{U^{\perp}}$ is given by
  \[
    \sqrt{D} \mapsto \left( X,Y^{\ast} \right).
  \]
\end{lemma}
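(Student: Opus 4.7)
The plan is a direct verification that combines the abstract characterization of the $T$-action from \cref{sec:GspinU} with short matrix computations. Recall that under $T(\Q) \cong k^{\times}$, an element $\alpha$ acts on $U \cong k$ by multiplication with $\alpha/\bar\alpha$ and, because the Clifford algebra of $U$ commutes with $U^{\perp}$, trivially on $U^{\perp}$. For $\alpha = \sqrt{D}$ we have $\sqrt{D}/\overline{\sqrt{D}} = -1$, so the image of $\sqrt{D}$ in $G(\Q)$ must act on $V = M_2(\Q)$ as $-1$ on $U$ and as $+1$ on $U^{\perp}$. Since $(g_1, g_2) \in G$ acts on $V$ by $x \mapsto g_1 x g_2^{-1}$, the kernel of the action is the central subgroup $\{(\lambda I_2, \lambda I_2)\}$, so this characterization pins the image down up to central scaling.

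To verify that $(X, Y)$ realises this, I would check: (i) $\det X = -B^2 - (D - B^2) = -D = \det Y$, so $(X, Y) \in G(\Q)$; (ii) $X f_j = -f_j Y$ for $j = 1, 2$ by direct $2 \times 2$ multiplication (for $j=1$ one obtains $X f_1 = \left(\begin{smallmatrix} B & (B^2+D)/2 \\ -2A & -AB \end{smallmatrix}\right) = -f_1 Y$, and the case $j=2$ is analogous); and (iii) $X \tilde f_j = \tilde f_j Y$ for $j = 1, 2$, again by a routine computation. That the assignment $\sqrt{D} \mapsto (X, Y)$ extends to a morphism of algebraic groups $T \to G$ follows from the identities $X^2 = D \cdot I_2 = Y^2$ (also a short calculation), which make the corresponding $\Q$-algebra map $\Q\langle 1, \sqrt{D}\rangle \to M_2(\Q) \oplus M_2(\Q)$ a ring homomorphism.

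For the embedding of $T^{-}$, observe that $Y Y^{\ast} = \det(Y) I_2 = -D \cdot I_2$ together with $Y^2 = D \cdot I_2$ forces $Y^{\ast} = -Y$. Therefore $(X, Y^{\ast}).x = -X x Y^{-1}$, and the verifications above immediately give $(X, Y^{\ast}).f_j = f_j$ and $(X, Y^{\ast}).\tilde f_j = -\tilde f_j$. This is exactly the required behaviour for $\sqrt{D} \in T^{-} = \GSpin_{U^{\perp}}$: trivial on $U$ and acting by $-1$ on $U^{\perp}$.

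The main obstacle is purely bookkeeping rather than conceptual: keeping track of the sign ambiguity in the identification $\GSpin_U(\Q) \cong k^{\times}$ (which depends on the chosen orientation of $\{f_1, f_2\}$) and the central scaling freedom in $(g_1, g_2)$. Both are automatically resolved by the direct matrix verifications, and the square identities $X^2 = Y^2 = D \cdot I_2$ provide a clean consistency check that $(X, Y)$ is indeed the correct image of $\sqrt{D}$, not merely a scalar multiple of it.
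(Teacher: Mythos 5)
Your proof is correct, and the computations I spot-checked ($\det X=\det Y=-D$, $Xf_1=-f_1Y$, $X\tilde f_1=\tilde f_1 Y$, $X^2=Y^2=D\,I_2$, $Y^\ast=-Y$) all hold, but you take a genuinely different route from the paper. The paper identifies the image of $T$ as the stabilizer of the CM point $z_P=\bigl(\tau(\fraka),\tau(\OD)\bigr)\in\uhp^2$ and invokes Shimura's description of the two embeddings $q_\tau,\bar q_\tau\colon k\hookrightarrow M_2(\Q)$ attached to a CM point, checking that $q_{\tau(\fraka)}(\sqrt{D})=X$, $q_{\tau(\OD)}(\sqrt{D})=Y$ and $\bar q_{\tau(\OD)}(\sqrt{D})=Y^{\ast}$; you instead verify directly that $(X,Y)$ induces the correct element of $\SO_V$ by computing its conjugation action on the explicit bases $f_j$ of $U$ and $\tilde f_j$ of $U^{\perp}$, using the $\alpha/\bar\alpha$ description of the $\GSpin_U$-action already established in \cref{sec:GspinU}. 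Your version is more elementary and self-contained ($2\times2$ matrix algebra plus the fact that $C_U^0$ commutes with $U^\perp$), and it makes the compatibility with the splitting $P\oplus P^-\subset L$ completely explicit. The paper's version buys something you only get implicitly: the normalization $q_\tau(\mu)\binom{\tau}{1}=\mu\binom{\tau}{1}$ with $\tau\in\uhp$ fixes which of $\pm(X,Y)$ is the image of $\sqrt{D}$ (i.e., it resolves the orientation ambiguity via the choice of $z_P^{+}$ over $z_P^{-}$), and it records directly that the image of $T(\adeles_f)$ stabilizes $z_P$, which is what is actually used in the subsequent evaluation of $\Phi_L$ at CM points. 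Your square identity $X^2=Y^2=D\,I_2$ pins the lift down only up to that sign, which you correctly flag as a convention; if you wanted to match the paper's normalization exactly you could add the one-line check $X\binom{\tau(\fraka)}{1}=\sqrt{D}\binom{\tau(\fraka)}{1}$.
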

\renewcommand{\arraystretch}{1}
\begin{proof}
  This can easily be seen by determining the stabilizer
  of the point $z_{P}$ as given above on $\uhp^{2}$.
  We also refer to Section 4.4 in \cite{shimauto}.
  Proposition 4.6, ibid., tells us that if $\C/\Lambda$ is an
  elliptic curve with complex multiplication, $\Lambda = \Z + \Z\tau$,
  then there is an embedding $q_\tau$ of $k$ into $M_2(\Q)$,
  such that
  \[
    q_{\tau}(k^{\times}) = \{A \in \GL_2^{+}(\Q) \mid A\tau = \tau\}.
  \]  
  There are exactly two embeddings with this property for a given point $\tau$.
  One of them has the property
  \[
    q_{\tau}(\mu)\colvec{\tau}{1} = \mu \colvec{\tau}{1}.
  \]
  The other one, denoted $\bar{q}_{\tau}$, satisfies the same property with $\tau$ replaced by $\bar{\tau}$,
  that is,
  \[
   \bar{q}_{\tau}(\mu)\colvec{\tau}{1} = \bar{\mu} \colvec{\tau}{1}.
  \]

  It is easy to check that $q_{\tau(\fraka)}(\sqrt{D})=X$ and $q_{\tau(\OD)}(\sqrt{D})=Y$
  as well as $\bar{q}_{\tau(\OD)}(\sqrt{D})=Y^{*}$.
  Using these formulas, we see that the correct embedding of $k^\times \times k^\times$ in our case is given by
  $(\lambda,\mu) \mapsto (q_{\tau(\fraka)}(\lambda)q_{\tau(\fraka)}(\mu),q_{\tau(\OD)}(\lambda)\bar{q}_{\tau(\OD)}(\mu))$
  for $\lambda,\mu \in k$.
\end{proof}

Similar to the proof of Theorem 6.31 in \cite{shimauto}, we have a commutative diagram
\[
\xymatrix{
   \Q^{2} \ar[r]^{\iota_{\tau}} \ar[d]_{q_{z}(\mu)} & k^{\times}  \ar[d]^{\mu}\\
   \Q^{2} \ar[r]^{\iota_{\tau}}                     & k^{\times},
}
\]
where 
$$
\iota_{\tau}(x_1,x_2) = (x_{1},x_{2}) \colvec{\tau}{1}
$$ 
and the vertical arrow
on the right is given by multiplication with $\mu$.
The map $q_{\tau}$ extends to $\adeles_{k,f}^{\times}$ and $q_{\tau}(\adeles_{f}) \subset \GL_{2}(\adeles_{f})$
acts on lattices in $\Q^{2}$. Similarly, we have the linear action given by an idele on the right
and these actions commute with the map $\iota_{\tau}$ in the same way.
We let $\fraka_{\tau} = \Z \tau + \Z$ and $q_{\tau}(h) = \gamma k$ for $h \in \adeles_{k,f}^{\times}$
and with $\gamma \in H(\Q)$ and $k \in K$. There is an element $\mu \in k^{\times}$, such that
\[
  \gamma^{-1}\colvec{\tau}{1} = \mu \colvec{\tau}{1}.
\]
Therefore, we have
\[
  h^{-1}\fraka_\tau = \iota_{\tau}(\Z^{2}q(h)^{-1}) = \iota_{\tau}(\Z^{2}\gamma^{-1}) = \mu \fraka_{w},
\]
where $w = \gamma^{-1}\tau$.

This shows that for $g \in T(\adeles_f)$ and $h \in T^{-}(\adeles_{f})$,
we have
\[
  H(\Q)((\tau(\fraka),\tau(\OD)), (g,h))K = H(\Q)((\tau((gh)^{-1}\fraka),\tau((g^{-1}h))),(1,1))K.
\]
Here, we used the notation $(h)$ for the ideal corresponding to $h$ and $(h)\fraka$
means multiplication of fractional ideals (and \emph{not} the action of $\GSpin_{U}$
on lattices in $U$).
\begin{proposition}
  \label{prop:PhiPeta}
  Let $g,h \in T(\adeles_f) \cong \adeles_{k,f}^{\times}$.
  We write $\tau_{1} = \tau((hg)^{-1}\fraka) = u_1 + iv_1$ and
  $\tau_2 = \tau((gh^{-1})) = u_2 + iv_2$ and obtain
  \[
   \Phi_{P}(\Theta_{P}(\tau,g),h) = - 4 \log\abs{(v_1v_2)^{1/4}\eta(\tau_1)\eta(\tau_2)} -\log(2\pi) - \Gamma'(1).
  \]
Note that the value depends only on the ideal classes of $(h)$, $(g)$ and $\fraka$.
\end{proposition}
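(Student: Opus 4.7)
The plan is to identify $\Phi_P(\Theta_P(\tau,g), h)$ with a special value of the Borcherds lift $\Phi_L(\cdot, 1)$ on the Shimura variety $X_K$ associated with $L$, and then to read off the stated formula from the closed-form expression \eqref{eq:Boeta}. The geometric input is the orthogonal decomposition $V = U \oplus U^{\perp}$ realised by the embedding $P \oplus P^{-} \hookrightarrow L$ constructed in the preceding lemma: at the CM point $z_P = z_U$, the positive-definite $2$-plane of $V$ coincides with $U \otimes \R$ and its orthogonal complement with $U^{\perp} \otimes \R$, so the Siegel theta kernel factorises along this splitting.

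Concretely, $L$ is unimodular and $[L : P \oplus P^{-}]=\abs{D}$, so $L/(P \oplus P^{-})$ is maximal isotropic inside $(P \oplus P^{-})'/(P \oplus P^{-}) = P'/P \oplus (P^{-})'/P^{-}$. Inspecting the form $Q_P \oplus (-Q_P)$ identifies this isotropic subgroup with the diagonal $\{(\beta,\beta) : \beta \in P'/P\}$. Combined with $\Theta_{P^{-}}(\tau,h) = \overline{\Theta_P(\tau,h)}$ (because $P^{-}$ carries the negated form) and careful tracking of the weight factors, this yields the seesaw-type factorisation
\[
 \Theta_L(\tau, z_P, \iota(g,h))_{\frake_0} \;=\; v \sum_{\beta \in P'/P} \Theta^P_\beta(\tau, g)\, \overline{\Theta^P_\beta(\tau, h)} .
\]
Substituting this into the regularised integral defining $\Phi_L(\cdot,1)$ produces an integrand which is the complex conjugate of that of $\Phi_P(\Theta_P(\tau,g), h)$; since $\Phi_L$ is real by \eqref{eq:Boeta}, we conclude the seesaw identity
\[
  \Phi_L(\iota(g,h) \cdot z_P, 1) \;=\; \Phi_P(\Theta_P(\tau, g), h) .
\]

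Finally, the commutative diagram involving $q_\tau$ and $\iota_\tau$ established just before the proposition identifies $\iota(g,h) \cdot z_P$ with the point $(\tau_1, \tau_2) \in \uhp^2$ of the statement, so plugging into \eqref{eq:Boeta} yields the claimed formula. The main technical hurdle is the factorisation of $\Theta_L$ at the split point $z_P$ together with the $v$-factor bookkeeping, which must be done carefully so that the weight-$0$ integrand defining $\Phi_L$ matches the weight-$1$ integrand (with its $v^{1}\,d\mu$) defining $\Phi_P$. Once the gluing subgroup $L/(P \oplus P^{-})$ is identified, the regularisation step presents no additional difficulty because the sum over $\beta \in P'/P$ is finite.
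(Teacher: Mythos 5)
Your proposal is correct and follows essentially the same route as the paper: the seesaw identity obtained by restricting the constant function along $P \oplus P^{-} \hookrightarrow L$, the identification of $L/(P\oplus P^{-})$ with the diagonal of $P'/P \oplus P'/P$ (so that the pairing picks out $\sum_{\beta}\Theta^P_{\beta}(\tau,g)\overline{\Theta^P_{\beta}(\tau,h)}$ with the extra factor of $v$), and the evaluation of $\Phi_L$ at the translated CM point via \eqref{eq:Boeta} together with the $q_\tau$/$\iota_\tau$ diagram. The paper packages the factorisation through the maps $\res_{L/(P\oplus P^{-})}$ and $\tr_{L/(P\oplus P^{-})}$ of Bruinier--Yang rather than writing out the $\frake_0$-component of $\Theta_L$ directly, but this is only a difference of presentation.
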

\begin{proof}
  The proposition essentially follows from the identity
\begin{align*}
  - 4 \log\abs{(v_1v_2)^{1/4}\eta(\tau_1)\eta(\tau_2)} -\log(2\pi) - \Gamma'(1) &= \Phi_{L}((z_P,(h,g)),1),
\end{align*}
which is a consequence of \cref{eq:Boeta} and our considerations above as follows:
We use the maps $\res_{L/(P \oplus P^{-})}$ and $\tr_{L/(P \oplus P^{-})}$ defined in Lemma 3.1 in \cite{bryfaltings}.
Note that the Siegel theta function satisfies
\[
  \Theta_{P \oplus P^{-}}(\tau,(h,g)) = \Theta_{P}(\tau,h) \otimes \Theta_{P^{-}}(\tau,g)
\]
and $\Theta_{P \oplus P^{-}}^{L} = \Theta_{L}$.
Moreover, we have that
\begin{align*}
  \langle f(\tau), \overline{\Theta_{L}(\tau,z_{P},(h,g))} \rangle
  &= \langle f_{P \oplus P^{-}}(\tau), \overline{\Theta_{P}(\tau,h) \otimes \Theta_{P^{-}}(\tau,g)} \rangle\\
  &= \langle f_{P \oplus P^{-}}(\tau), \Theta_{P^{-}}(\tau,h) \otimes \Theta_{P}(\tau,g) \rangle.
\end{align*}

With the embeddings defined above, we consider $P \oplus P^{-}$ as a sublattice of $L$.
Then we have $P \oplus P^{-} \subset L = L' \subset P' \oplus (P^{-})^{'}$
and
\[
  L/(P \oplus P^{-}) \subset P'/P \oplus (P^{-})^{'}/P^{-} \cong P'/P \oplus P'/P.
\]
Using our embeddings defined above, it is not hard to see that for the constant function $1$, we have
\[
  1_{P \oplus P^{-}} = \res_{L/(P \oplus P^{-})}(1) = \sum_{\beta \in P'/P} \frake_{\beta + P} \otimes \frake_{\beta + P^{-}}.
\]
Thus, we obtain
\begin{align*}
      \Phi_{L}((z_P,(h,g)),1)
      &= \int_{\SL_2(\Z) \bs \uhp}^{\reg}
          \langle 1_{P \oplus P^{-}} , \Theta_{P^{-}}(\tau,h) \otimes \Theta_{P}(\tau,g) \rangle d\mu(\tau) \\
      &= \int_{\SL_2(\Z) \bs \uhp}^{\reg}
          \langle \Theta_{P}(\tau,g) ,\overline{\Theta_{P}(\tau,h)} \rangle v d\mu(\tau) \\
      &= \Phi_{P}(\Theta_{P}(\tau,g),h).\qedhere
\end{align*}
\end{proof}

\section{Proofs of Proposition 1.4 and Theorem 1.2}
\label{sec:vector-valued-theta}
As in the introduction, let $(P,Q)$ be a two-dimensional positive definite even lattice.
We let $U = P \otimes_{\Z} \Q$ be the associated rational quadratic space.
We will assume that $(P,Q)$ is given by a fractional ideal $\fraka$ in an imaginary number field $k$ as in the last sections
and only use the letter $P$ to distinguish between the scalar valued and vector valued case. We have that the dual lattice of $P$
is given by $P' \cong \different{k}^{-1}\fraka$, where $\different{k}$ denotes the different ideal of $k$.
Recall the definition of the theta function $\Theta_P(\tau,h)$ attached to $P$ from the introduction.

\begin{remark}
We should warn the reader that if $P=\fraka \subset k$ is a fractional
ideal, the theta function $\Theta_P(\tau,h)$ is in general not verbatim equal to
the vector-valued theta function corresponding to $(h)^{2}\fraka$,
if $(h)$ denotes the ideal corresponding to $h$ (defined as in \cref{thm:idck}).
This is due to the fact that $T(\adeles_{f})$ also acts on the components via automorphisms.
\end{remark}

We can prove the explicit expression in \cref{prop:thetapet} for the Petersson inner products of vector
valued theta functions in $\Theta(P)$ using \cref{prop:PhiPeta}.
\begin{proof}[Proof of \cref{prop:thetapet}]
  Let us abbreviate
  \[
    f(\frakb) = v(\frakb)^{1/4}\eta(\tau(\frakb))
  \]
  for any fractional ideal (class) $\frakb \subset k$.
  We have by definition and \cref{prop:PhiPeta} that
  \begin{align*}
    (\Theta_P(\tau,\psi),\Theta_P(\tau,\chi)) &= \sum_{h,g \in T(\adeles_f)/K_T} \psi(g) \bar\chi(h) \Phi_{P}(\Theta_{P}(\tau,g),h) \\
      &= - 4 \sum_{h,g \in T(\adeles_f)/K_T} \psi(g) \bar\chi(h) \log\abs{f((hg)^{-1}\fraka)f(\tau((h^{-1}g)))}
  \end{align*}
  because for non-trivial characters the constant does not contribute to the sum
  by orthogonality of characters.
  We split the sum above into 
  \begin{align*}
    &\sum_{g,h} \psi(g) \bar\chi(h) \log\abs{f((hg)^{-1}\fraka)} + \sum_{g,h} \psi(g) \bar\chi(h) \log\abs{f((h^{-1}g))} \\
    &\quad = \sum_{g} \psi(g) \chi(g) \sum_{h}\chi(h)\log\abs{f(h\fraka)}
              + \sum_{g} \psi(g) \bar\chi(g) \sum_{h}\chi(h)\log\abs{f(h)} \\
    &\quad =
    \begin{cases}
      h_{k} \sum_{h}\chi(h)\log\abs{f((h)\fraka)}, &\text{if } \psi = \bar\chi,\\
      h_{k} \sum_{h}\chi(h)\log\abs{f(h)}, &\text{if } \psi=\chi, \\
      0, & \text{otherwise},
    \end{cases}
  \end{align*}
  as long as we do not have $\chi = \psi = \bar{\psi}$, in which case we get the sum of the two terms.
  For the first sum, we obtain
  \begin{align*}
    \sum_{h}\chi(h)\log\abs{f((h)\fraka)}   &= \sum_{h} \chi(h) \log\abs{v((h)\fraka)^{1/4}\eta(\tau((h)\fraka))}\\
                                            &= \bar{\chi}(\fraka) \sum_{h} \chi(h) \log\abs{v((h))^{1/4}\eta(\tau((h)))}\qedhere
  \end{align*}
\end{proof}

We can now give the proof of \cref{thm:thetaPbasis}.

\begin{proof}[Proof of \cref{thm:thetaPbasis}]
  That $E_{P}(\tau)$ as defined above is really an Eisenstein series follows from the Siegel-Weil formula (Theorem 2.1 of \cite{bryfaltings}).
  The Eisenstein series correspond to isotropic vectors in the discriminant group $P'/P$ (see \cite{brhabil}) and we assumed that $\abs{P'/P}=\abs{D}$ is square-free, which implies (i).

  That $\Theta_{P}(\tau,\psi)$ is a cusp form for non-trivial
  $\psi$ is clear.

  To see that $\calB(P)$ is a basis of $\Theta(P)$, first note that
  \cref{prop:thetapet} implies that the set $\calB(P)$
  is linear independent.
  Moreover, if $\psi^2 \neq 1$ the Proposition also implies
  \[
    (\bar{\psi}(\fraka) \Theta_{P}(\tau,\psi) - \Theta_{P}(\tau,\bar{\psi}),f(\tau)) = 0
  \]
  for all $f \in \Theta(P)$.
  Therefore, $\bar\psi(\fraka) \Theta_{P}(\tau,\psi) - \Theta_{P}(\tau,\bar{\psi}) \in \Theta(P) \cap \Theta(P)^{\perp}$,
  where $\Theta(P)^{\perp}$ is the orthogonal complement of $\Theta(P)$
  with respect to the Petersson inner product. Consequently,
  $\Theta_{P}(\tau,\bar{\psi}) = \bar\psi(\fraka) \Theta_{P}(\tau,\psi)$.

  Finally, let $A$ be the set of elements $x \in \Clk$,
  such that $\bar{x} = x$ and let $B = \Clk \setminus A$.
  Then $\abs{\calB(P)} = \abs{A} + \abs{B}/2$.
  Moreover, it is well known that $\abs{A} = 2^{t-1}$ and $\abs{B} = h_{k} - 2^{t-1}$ which implies the assertion.
\end{proof}

\section{Liftings of newforms in the case of square-free level}
\label{sec:sym}
In this section we will show some general properties of
liftings of scalar valued modular forms to vector valued modular forms
in the case of square-free level. We will apply these results to
relate scalar valued theta series to vector valued ones.
This lifting has been used by Bundschuh in his thesis \cite{BundschuhDiss},
by Bruinier and Bundschuh \cite{BruinierBundschuh} and Scheithauer \cite{Scheithauer-liftings}.

Let $L$ be an even lattice with quadratic form $Q$ of type $(2,n)$, level $N$ and determinant $D = \abs{L'/L}$.
The group $\Gamma_0(N)$ acts on $\frake_{0}$ via the Weil
representation $\rho_{L}$ by a character. It is given by
\[
\chi_{L} \left(
  \begin{pmatrix}
    a & b \\ c & d
  \end{pmatrix}
\right) =
\begin{cases}
  \left( \frac{(-1)^{\frac{n+2}{2}}D}{d} \right)& \text{if } d>0,\\
  (-1)^{\frac{n+2}{2}} \left( \frac{(-1)^{\frac{n+2}{2}}D}{-d} \right)& \text{if } d<0.
\end{cases}
\]
We will throughout assume that $N$ is square-free. 
Then $2+n$ is even and the character is quadratic.
Moreover, this implies that for any $f \in M_{k,L}$, the component function
$f_{0}$ is a modular form in $M_{k}(N,\chi_{L})$. Conversely,
we can ``lift'' any $f \in M_{k}(N,\chi_{L})$ to a vector-valued
modular form by defining
\begin{equation}
  \label{liftdef}
  \S_{L}(f) = \sum_{\gamma \in \Gamma_{0}(N) \bs \SL_{2}(\Z)} (f \mid_{k} \gamma) \rho_{L}(\gamma^{-1})\frake_{0} \in M_{k,L}.
\end{equation}

There is also a map that is adjoint to the lift with respect
to the Petersson inner product. It is simply given by the map $F \mapsto F_{0}$
for $F \in M_{k,L}$. We also write $(f,g)$ for the Petersson inner product on the space of
cusp forms for $\Gamma_0(N)$ (possible with character), i.e.
\[
 (f,g) = \int_{\Gamma_0(N)\bs\uhp} f(\tau) \overline{g(\tau)}\Im(\tau)^{k} d\mu(\tau)
\]
for $f,g \in S_k(N,\chi)$.
\begin{proposition}
  \label{prop:pet-lift}
  Let $f \in S_{k}(N,\chi_L)$ and let $F \in M_{k,L}$. Then, we have
  for the Petersson inner product
  \[
  (\S_{L}(f),F) = (f,F_0).
  \]
\end{proposition}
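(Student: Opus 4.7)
The plan is to prove the adjointness by the standard unfolding argument. By definition,
\[
  (\S_{L}(f),F) = \int_{\SL_{2}(\Z) \bs \uhp} \sum_{\gamma \in \Gamma_{0}(N) \bs \SL_{2}(\Z)} (f|_{k} \gamma)(\tau)\, \langle \rho_{L}(\gamma^{-1}) \frake_{0}, \overline{F(\tau)} \rangle\, v^{k}\, d\mu(\tau),
\]
so the first task is to rewrite the pairing $\langle \rho_{L}(\gamma^{-1}) \frake_{0}, \overline{F(\tau)} \rangle$ in a form that involves only the zero component $F_{0}$.

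The key step is the unitary-adjoint identity: since $\rho_{L}$ is unitary with respect to the Hermitian form $(u,v) = \langle u, \bar v \rangle$ on $\C[L'/L]$ for which $\{\frake_{\mu}\}$ is orthonormal, one has
\[
  \langle \rho_{L}(\gamma^{-1}) \frake_{0}, \overline{F(\tau)} \rangle = \overline{\bigl(\rho_{L}(\gamma) F(\tau)\bigr)_{0}}.
\]
Combining this with the transformation law $\rho_{L}(\gamma) F(\tau) = (c\tau+d)^{-k} F(\gamma \tau)$ and with $(f|_{k} \gamma)(\tau) = (c\tau+d)^{-k} f(\gamma \tau)$, the factors $(c\tau+d)^{-k}(c\bar\tau+d)^{-k} v^{k}$ collapse into $(\Im \gamma\tau)^{k}$, so that the $\gamma$-th summand becomes $f(\gamma\tau) \overline{F_{0}(\gamma\tau)} (\Im \gamma\tau)^{k}$.

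At this point the usual unfolding takes over. The function $f(\tau) \overline{F_{0}(\tau)} v^{k}$ is $\Gamma_{0}(N)$-invariant because $F_{0} \in M_{k}(N,\chi_{L})$ and $f \in S_{k}(N,\chi_{L})$ transform with the same character (the square-free assumption on $N$ is precisely what guarantees that $\chi_{L}$ is a character and that $F_{0}$ really lies in $M_{k}(N,\chi_{L})$). Hence, summing over $\Gamma_{0}(N) \bs \SL_{2}(\Z)$ and integrating over $\SL_{2}(\Z) \bs \uhp$ collapses into a single integral over $\Gamma_{0}(N) \bs \uhp$, which is exactly $(f, F_{0})$. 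The only real subtlety is a careful bookkeeping of the unitary-adjoint identity; the exchange of sum and integral is automatic since $f$ is a cusp form (exponential decay at the cusps) while $F$ has at worst polynomial growth.
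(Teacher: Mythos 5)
Your argument is correct and coincides with the paper's own proof: the same unitary-adjoint step moving $\rho_L(\gamma^{-1})$ across the pairing, the same collapse of automorphy factors into $\Im(\gamma\tau)^k$, and the same unfolding of $\sum_\gamma \int_{\gamma\calF}$ into a single integral over $\Gamma_0(N)\bs\uhp$. The extra remarks on invariance of the integrand and convergence are fine but not needed beyond what the paper records.
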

\begin{proof}
  Using the definitions, we obtain
  \begin{align*}
    \label{eq:pet1}
    (\S_{L}(f),F) &= \int_{\calF} \langle \sum_{\gamma \in \Gamma_0(N) \bs \SL_{2}(\Z)}
    (f \mid_k \gamma) \rho_{L}(\gamma^{-1})\frake_0,
    \overline{F(\tau)} \rangle v^k d\mu(\tau)\\
    &= \int_{\calF} \sum_{\gamma \in \Gamma_0(N) \bs \SL_{2}(\Z)} (f \mid_k \gamma) \langle \frake_0,
    \overline{\rho_{L}(\gamma) F(\tau)} \rangle v^k d\mu(\tau)\\
    &= \int_{\calF} \sum_{\gamma \in \Gamma_0(N) \bs \SL_{2}(\Z)} \Im(\gamma\tau)^{k} f(\gamma \tau) \langle \frake_0,
    \overline{F(\gamma\tau)} \rangle d\mu(\tau)\\
    &= \sum_{\gamma \in \Gamma_0(N) \bs \SL_{2}(\Z)}
    \int_{\gamma\calF} \Im(\tau)^{k} f(\tau) \overline{F_0(\tau)}d\mu(\tau).
  \end{align*}
  The last line is equal to the Petersson inner product of $f$ and $F_0$ defined as in the statement of the Proposition.
\end{proof}

Following Bundschuh \cite{BundschuhDiss}, we define a subspace of the newforms in $S_{k}(N,\chi_{L})$.
Let $A = L'/L$ and for a prime $p$ denote by $A_{p}$ the $p$-component of $A$.
Moreover, write $\chi_L = \prod_{p \mid N} \chi_{L,p}$
as a product of characters modulo $p$ for $p \mid N$.
For each prime $p_{i}$ dividing $N = p_{1} \ldots p_{r}$, we define a an element
$\varepsilon_{i} \in \{0, 1, -1\}$.
\begin{definition}
  \label{def:epsilons}
  If $\dim_{\F_{p_{i}}} A_{p_{i}} \geq 2$ or $p_{i}=2$, we define $\varepsilon_{i}=0$.
  If $\dim_{\F_{p_{i}}} A_{p_{i}} = 1$, $p_{i} \neq 2$ and $N Q \mid_{A_{p_{i}}}$ represents the squares modulo $p_{i}$,
  we define $\varepsilon_{i} = 1$.
  Otherwise, we define $\varepsilon_{i}=-1$.
  Using these signs, we let
  \[
    S_{k}^{\varepsilon_{1},\ldots,\varepsilon_{r}}(N,\chi_{L})
      = \{ f \in S_{k}^{\new}(N,\chi_{L})\, \mid\, \exists\, i \text{ with } \varepsilon_i \neq 0 \text{ and } \chi_{L,p_{i}}(n) = -\varepsilon_{i} \Rightarrow c_{f}(n) = 0 \}.
  \]
\end{definition}

\begin{remark}
  Note that we have
  \[
    S_{k}^{\new}(N,\chi_{L}) =
    \bigoplus_{(\varepsilon_{1},\ldots,\varepsilon_{r}) \in \{\pm1\}^{r}} S_{k}^{\varepsilon_{1},\ldots,\varepsilon_{r}}.
  \]
  We refer to the thesis of Bundschuh \cite[Satz 4.3.4]{BundschuhDiss} for details.
\end{remark}

\begin{theorem}
  \label{thm:Lift0}
  Let $L$ be an even lattice of square-free level $N$ and
  $f \in S_{k}^{\varepsilon_{1},\ldots,\varepsilon_{r}}(N,\chi_{L})$.
  Assume that $\dim_{\F_{p_{i}}} A_{p_{i}} = 1$ or
  $\dim_{\F_{p_{i}}} A_{p_{i}} \geq 2$ even for all odd $p_{i}$.
We have
  \[
    \langle \calS_L(f), \frake_{0 + L} \rangle = \nu \frac{N}{\abs{L'/L}} f.
  \]
  Here, we let
 \[
  \nu = \nu(m) = \#\{\mu \in L'/L\, \mid\, NQ(\mu) \equiv m \bmod{N} \}
\]
  for any $m \in \Z$ with $(m,N)=1$ and $\nu(m) \neq 0$, which is independent of the choice of $m$.
\end{theorem}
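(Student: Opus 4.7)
The plan is to compute $\langle \calS_L(f), \frake_{0+L} \rangle$ directly from the definition of $\calS_L(f)$, reorganize the resulting sum into a combination of Atkin--Lehner-type operators acting on $f$, and then invoke the defining property of $S_k^{\varepsilon_1,\ldots,\varepsilon_r}(N,\chi_L)$ to collapse the sum to a scalar multiple of $f$. Starting from
\[
\langle \calS_L(f), \frake_{0+L} \rangle(\tau) = \sum_{\gamma \in \Gamma_0(N) \bs \SL_2(\Z)} (f \mid_k \gamma)(\tau) \cdot \langle \rho_L(\gamma^{-1})\frake_{0+L}, \frake_{0+L} \rangle,
\]
I would first check that each summand depends only on the coset $\Gamma_0(N)\gamma$: the character $\chi_L$ produced by applying $\sigma \in \Gamma_0(N)$ to $f$ cancels the corresponding transformation of $\frake_{0+L}$ under $\rho_L(\sigma^{-1})$. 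Since $N$ is square-free, I would then choose coset representatives indexed by pairs $(d,j)$, with $d \mid N$ recording $\gcd(c,N)$ for $c$ the lower-left entry of $\gamma$ and $j$ running over an auxiliary residue set.

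For each representative the matrix coefficient $\langle \rho_L(\gamma^{-1})\frake_{0+L}, \frake_{0+L} \rangle$ is computed via the standard Shintani-type formula for the Weil representation: when $c \neq 0$ it becomes a normalized Gauss sum of $Q$ on $L/cL$. The hypothesis that each $\dim_{\F_{p_i}} A_{p_i}$ is either $1$ or an even integer $\geq 2$ is precisely what gives these local Gauss sums a uniform closed form. By the Chinese remainder theorem, the full Gauss sum factors into local contributions at the primes $p_i \mid d$. After summing over the auxiliary index $j$, the contribution for a fixed $d$ can be identified with $f \mid_k W_d$ times an explicit product of local Gauss sums, where $W_d$ is the Atkin--Lehner involution associated with $d$.

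The final step invokes Bundschuh's description of $S_k^{\varepsilon_1,\ldots,\varepsilon_r}(N,\chi_L)$ from \cref{def:epsilons}: the Fourier-coefficient vanishing condition there is equivalent to $f$ being a simultaneous eigenfunction of the Atkin--Lehner involutions $W_{p_i}$, with eigenvalues determined by the $\varepsilon_i$ (so $W_d$ acts as the product of these signs over $p_i \mid d$). Substituting these eigenvalues reduces the entire sum to a single scalar multiple of $f$. The resulting constant matches $\nu N/\abs{L'/L}$ because the number of $\mu \in L'/L$ that contribute to each local Gauss sum is exactly the combinatorial function $\nu(m)$; its independence from the choice of $m$ with $(m,N) = 1$ and $\nu(m) \neq 0$ is also guaranteed by the dimension hypothesis.

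The principal obstacle is the local-global analysis of the Weil representation in the middle step. The Shintani formula gives a clean closed expression only after one identifies which residues in $L/cL$ contribute non-trivially to the Gauss sum, and this is where the restriction on $\dim_{\F_{p_i}} A_{p_i}$ becomes essential: for odd $p_i$ with $\dim_{\F_{p_i}} A_{p_i}$ odd and $\geq 3$, extra Kronecker characters appear in the evaluation and the sum no longer cleanly aligns with the Atkin--Lehner eigenspaces of $f$. Verifying that the resulting scalar is truly independent of the coset representative (and equals $\nu N/\abs{L'/L}$ rather than depending on finer invariants of $\gamma$) is the most delicate bookkeeping step.
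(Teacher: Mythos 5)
Your overall strategy---expand the coset sum defining $\calS_L(f)$, evaluate the matrix coefficients $\langle \rho_L(\gamma^{-1})\frake_0,\frake_0\rangle$ as Gauss sums sorted by $d=\gcd(c,N)$, and then let the newform structure of $f$ collapse the sum---is a genuinely different route from the paper's, and is the one taken in Bruinier--Bundschuh and Scheithauer for such liftings. The paper instead quotes the closed formula \cref{eq:Fmulift} for \emph{every} component $F_\mu$ with $(NQ(\mu),N)=1$, computes $F_0\mid_k W_N$ from the $S$-action of $\rho_L$, matches it with $\nu\frac{N}{\abs{L'/L}}\,f\mid_k W_N$ on all Fourier coefficients prime to $N$ (this is where the constancy of $\nu(m)$ and the $S_k^{\varepsilon_1,\ldots,\varepsilon_r}$ condition enter), and then concludes by newform theory: $F_0$ is a newform, and an element of the new subspace whose coefficients vanish for all $(n,N)=1$ is zero.

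However, your final step contains a genuine gap. The condition defining $S_k^{\varepsilon_1,\ldots,\varepsilon_r}(N,\chi_L)$ is \emph{not} equivalent to $f$ being a $W_{p_i}$-eigenform with eigenvalue $\pm1$. Here each $p_i$ divides the conductor of $\chi_L$, so by Atkin--Li theory $W_{p_i}$ acts on the newform $f$ with a \emph{pseudo-eigenvalue}: $f\mid_k W_{p_i}$ is a unimodular multiple of the newform whose coefficients are twisted/conjugated at $p_i$, and this multiple is in general not $\pm1$ (already for the dihedral forms $\theta_\chi$ of level $\abs{D}$ one has $\theta_\chi\mid W_{\abs{D}}$ proportional to $\theta_{\bar\chi}\neq\pm\theta_\chi$). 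The coefficient condition $c_f(n)=0$ for $\chi_{L,p_i}(n)=-\varepsilon_i$ is instead a statement about invariance of $f$ under twisting by $\chi_{L,p_i}$ up to the sign $\varepsilon_i$; translating that into the operator identities you need involves the Gauss sum of $\chi_{L,p_i}$ and the $U_{p_i}$-eigenvalue $c_f(p_i)$ (with $\abs{c_f(p_i)}^2=p_i^{k-1}$), not a naive sign. Relatedly, the contribution of the cosets with $\gcd(c,N)=d$ is not $f\mid_k W_d$ times a Gauss sum but rather involves $f\mid_k W_d U_d$ (the sum over your auxiliary index $j$ produces the $U_d$), so the collapse to a multiple of $f$ requires the $U_{p_i}$- and pseudo-eigenvalue relations simultaneously. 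Your plan can be pushed through with this corrected bookkeeping, but as stated the identification ``$W_d$ acts as the product of the signs $\varepsilon_i$'' would give the wrong constant (and in general not even a multiple of $f$). The paper sidesteps all of this by working with $F_0\mid_k W_N$ and invoking the fact that a new-subspace form is determined by its coefficients prime to the level.
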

\begin{proof}
  We follow the proof of Satz 4.3.9 in \cite{BundschuhDiss}.
  Let
  \[
    f(\tau) = \sum_{n =1}^{\infty} a(n) e(n\tau)
  \]
  be the Fourier expansion of $f$ (at the cusp $\infty$)
  and let
  \[
    f \mid_{k} W_{N} = \sum_{n =1}^{\infty} a_{N}(n) e(n\tau).
  \]
  Let $\mu \in L'/L$ with $(NQ(\mu),N)=1$.
  In this case it is not hard to see that
  \begin{equation}
    \label{eq:Fmulift}
    F_{\mu}(\tau) = \frac{N^{1-k/2}e(\sgn(L)/8)}{\sqrt{\abs{L'/L}}}
                 \sum_{n \equiv NQ(\mu) \bmod{N}} a_{N}(n) e\left( \frac{n}{N} \tau \right),
  \end{equation}
where $\calS_L(f) = F(\tau) = \sum_{\mu \in L'/L} F_\mu(\tau)\frake_\mu$.
  This follows from Theorem 4.2.8 in \cite{BundschuhDiss}
  and can also be deduced from explicit formulas for the Weil
  representation \cite{Scheithauer-formulas, fredrik-weilfqm}.
  We obtain
  \begin{align}
    F_{0} \mid_{k} W_{N}  &= N^{k/2} (F_{0} \mid_{k} S) (N\tau)
    = N^{k/2} \frac{e(-\sgn(L)/8)}{\sqrt{\abs{L'/L}}} \sum_{\mu \in L'/L} F_{\mu}(N\tau) \notag \\
                        &= \frac{N}{\abs{L'/L}} \sum_{0 \neq \mu \in L'/L}
                            \sum_{n \equiv NQ(\mu) \bmod{N}} a_{N}(n) e\left( n \tau \right)
                           + \sum_{\substack{n > 1 \\ (n,N)>1}} b(n) e(n\tau) \label{eq:F0WN},
  \end{align}
  with certain coefficients $b(n)$.

  By the assumptions of the theorem
  on the dimension of $A_{p}$ over $\F_{p}$ for $p \mid N$,
  we have that the representation number
  \[
    \nu(m) = \abs{\{\mu \in L'/L\, \mid\, NQ(\mu) \equiv m \bmod{N} \}}
  \]
  is in fact equal for all $m \neq 0$ with $\nu(m) \neq 0$
  (cf. \cite[Section 13]{kneser-qf}).
  Therefore, if we put $\nu = \nu(m)$ for any $m \in \Z$
  with $(m,N) = 1$ and $\nu(m) \neq 0$, the last expression simplifies to
  \begin{equation}
    \label{eq:13}
     F_{0} \mid_{k} W_{N} = \frac{N}{\abs{L'/L}} \nu \sum_{(n,N)=1} a_{N}(n) e\left( n \tau \right)
                           + \sum_{\substack{n > 1 \\ (n,N)>1}} b(n) e(n\tau),
  \end{equation}
  Here, we used the assumption that
  $f \in S_{k}^{\varepsilon_{1},\ldots,\varepsilon_{r}}(N,\chi_{L})$.
  Therefore, we can express the difference to $f \mid_k W_N$ as
  \[
    F_0 \mid_k W_N - \frac{N}{\abs{L'/L}} \nu f \mid_k W_N = \sum_{\substack{n \geq 1 \\ (n,N)>1}} c(n) e(n\tau)
  \]
  for some complex numbers $c(n)$.  
  However, we also have that $F_0$ is a newform (see for instance Proposition 7.3 in \cite{markus-fabian-liftings}).
  Thus, $F_{0} \mid_{k} W_N$ is a also newform, and hence the difference vanishes.
\end{proof}
The group $\Og(L'/L)$ acts on vector-valued modular forms
by permuting the basis vectors $\frake_{\mu}$. That is,
$\sigma \in \Og(L'/L)$ acts via $\frake_{\mu} \mapsto \frake_{\sigma(\mu)}$.
Using this action, we define the symmetrization of a modular form $f \in M_{k,L}$
as
\[
  f^{\sym} (\tau) = \sum_{\sigma \in \Og(L'/L)} f^{\sigma}(\tau)
  = \sum_{\mu \in L'/L} \sum_{\sigma \in \Og(L'/L)} f_{\mu} (\tau) \frake_{\sigma (\mu)}.
\]
This function is clearly invariant under the action of $\Og(L'/L)$.
We write $M_{k,L}^{\sym}$ for the subspace of $M_{k,L}$ that is invariant under
$\Og(L'/L)$. The map
\begin{equation}
  \label{eq:sym}
  M_{k,L} \longrightarrow M_{k,L}^{\sym}, \quad f \mapsto f^{\sym}
\end{equation}
is obviously surjective.

The following proposition can be found in Propositions 5.1 and 5.3 of \cite{Scheithauer-liftings}.
\begin{proposition}
  \label{prop:Scheit-sqf}
  Let $L$ be an even lattice of square-free level $N$.
  Then the orthogonal group $\Og(L'/L)$ acts transitively on all
  elements of the same norm and order in $L'/L$.
  Moreover, if $F \in M_{k,L}^{\sym}$ and $F_{0}=0$, then $F=0$.
\end{proposition}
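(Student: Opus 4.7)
The statement combines a group-theoretic claim about the discriminant module with an injectivity statement about the projection $F \mapsto F_0$; I treat them separately.

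\emph{Transitivity.} Since $N$ is square-free, the discriminant group decomposes along primes as $L'/L = \bigoplus_{p \mid N} A_p$, where each $A_p$ is the $p$-primary component, an $\F_p$-vector space carrying the induced non-degenerate quadratic form $Q_p \colon A_p \to \tfrac{1}{p}\Z/\Z$, and correspondingly $\Og(L'/L) = \prod_{p \mid N} \Og(A_p)$. For $\mu = (\mu_p)$, the order equals $\prod_{p\colon \mu_p \neq 0} p$, and by the Chinese remainder identification $\tfrac{1}{N}\Z/\Z \cong \bigoplus_p \tfrac{1}{p}\Z/\Z$ the total norm $Q(\mu) = \sum_p Q_p(\mu_p)$ uniquely determines each partial norm $Q_p(\mu_p)$. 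Hence two elements of the same order and norm automatically have matching partial norms. For each $p \mid \ord(\mu)$ I would apply Witt's extension theorem on the non-degenerate $\F_p$-quadratic space $A_p$ to produce $\sigma_p \in \Og(A_p)$ with $\sigma_p(\mu_p) = \mu_p'$ (with a little extra care in characteristic $2$); the product $\sigma = (\sigma_p)$, extended by the identity on the remaining components, is the desired isometry.

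\emph{Injectivity.} Let $F \in M_{k,L}^{\sym}$ with $F_0 \equiv 0$. For every $\gamma = \smallabcd \in \SL_2(\Z)$, taking the zero component of the identity $F \mid_k \gamma = \rho_L(\gamma) F$ gives
\[
   0 \;=\; (c\tau+d)^{-k} F_0(\gamma \tau) \;=\; \sum_{\mu \in L'/L} [\rho_L(\gamma)]_{0,\mu}\, F_\mu(\tau).
\]
Symmetry of $F$ reduces this to a linear system in the orbit values,
\[
   \sum_{O} c_\gamma(O) F_{[O]}(\tau) \;=\; 0, \qquad c_\gamma(O) := \sum_{\mu \in O} [\rho_L(\gamma)]_{0,\mu},
\]
summed over the $\Og(L'/L)$-orbits $O \subset L'/L$. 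I would extract enough independent equations by varying $\gamma$. For $\gamma = ST^n$ with $\gcd(n,N)=1$ one finds $c_\gamma(O) \propto |O|\, e(nQ(O))$, and a Vandermonde/Fourier argument in $n$ separates orbits of distinct norms. For $\gamma = ST^{p_0}S$ with $p_0 \mid N$, the entry $[\rho_L(\gamma)]_{0,\mu}$ factors as a product of local Gauss sums over the primes dividing $N$ (using $L'/L = \bigoplus A_p$), and the $p_0$-factor vanishes unless $\mu_{p_0} = 0$; only orbits with order coprime to $p_0$ contribute. Inclusion-exclusion over the primes in $\ord(\mu)$, combined with the previous Vandermonde step on the norm, isolates each orbit individually and forces $F_{[O]} \equiv 0$ for every $O$, hence $F = 0$.

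\emph{Main obstacle.} The delicate step is the second one: proving that the linear functionals $\mu \mapsto [\rho_L(\gamma)]_{0, \mu}$, restricted to the $\Og(L'/L)$-invariant subspace of $\C[L'/L]$, span its dual as $\gamma$ ranges over $\SL_2(\Z)$. This amounts to a rank computation for a system of products of local Gauss sums; the calculation is local at each prime thanks to $L'/L = \bigoplus A_p$, but at $p = 2$ and at primes with $\dim_{\F_p} A_p = 1$ the local Gauss sums must be handled carefully to ensure that they are non-degenerate enough to isolate each orbit.
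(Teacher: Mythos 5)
The paper itself offers no proof of this proposition --- it is imported wholesale from Propositions 5.1 and 5.3 of \cite{Scheithauer-liftings} --- so your argument can only be measured against that source, and in substance it is the same argument reconstructed from scratch. The transitivity half is correct as you set it up: square-freeness makes each $A_p$ an $\F_p$-vector space with a nondegenerate form valued in $\tfrac{1}{p}\Z/\Z$, the order of $\mu$ is the product of the $p$ with $\mu_p\neq 0$, the Chinese remainder theorem lets the total norm determine all partial norms, and Witt extension applies componentwise (at $p=2$ one needs the version for quadratic rather than bilinear forms, since the polar form on $A_2$ is alternating; nondegeneracy of the discriminant form is what makes it applicable). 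For the second half, the ``main obstacle'' you flag does resolve, and you should carry it out rather than leave it as a rank heuristic. Take $\gamma=ST^{m}ST^{n}$ with $m=\prod_{p\in S}p$ for a subset $S$ of the primes dividing $N$, and let $n$ range over \emph{all} residues mod $N$ (the restriction $\gcd(n,N)=1$ in your sketch is unnecessary and only muddies the Fourier inversion). The entry $[\rho_L(ST^{m}S)]_{0,\mu}$ factors over $p\mid N$; for $p\in S$ the local factor is $\abs{A_p}\,\delta_{\mu_p=0}$ because $mQ_p(\beta_p)\in\Z$ there, and for $p\notin S$ it is a nonvanishing Gauss sum times a phase depending only on $Q_p(\mu_p)$ (nonvanishing is standard for odd $p$ and is the Arf-invariant computation at $p=2$). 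Fourier inversion in $n$, combined with the fact that the total norm determines the partial norms, then gives: for every norm tuple $q=(q_p)$ and every $S$ disjoint from $\supp(q)$,
\[
  \sum_{\substack{T\supseteq\supp(q)\\ T\cap S=\emptyset}} \abs{O_{T,q}}\, F_{[O_{T,q}]}=0,
\]
where $O_{T,q}$ is the set of $\mu$ with support exactly $T$ and partial norms $q$ --- a single $\Og(L'/L)$-orbit by the first half of the proposition. As $S$ varies, the sets $T$ allowed on the left run over all lower intervals of the boolean lattice between $\supp(q)$ and the full set of primes, so M\"obius inversion isolates each term and yields $F_{[O]}=0$ for every orbit. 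With these points made explicit your proof is complete; Scheithauer's own proof runs the same induction organized by the divisors $c\mid N$ (equivalently, by the cusps of $\Gamma_0(N)$) rather than by the matrices $ST^{m}S$, but the mechanism is identical.
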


\section{Lifting scalar valued theta functions}
\label{sec:binary-theta}
As before, let $D<0$ be an odd fundamental discriminant and let $k = \Q(\sqrt{D})$ be the imaginary quadratic field of discriminant $D$.
We write $\OD$ for the ring of integers in $k$ and $\Clk \cong \Cl(K)$ for the ideal class group of $k$
as in \cref{sec:vector-valued-theta}.
We assume that $D$ is odd.
\subsection{Scalar valued theta functions}
\label{sec:scalar-valued-theta}
For an integral ideal $\fraka \subset \OD$, we can consider the associated theta function
\begin{equation}
  \label{eq:stheta}
  \theta_{\fraka}(\tau) = \sum_{a \in \fraka} e\left(\frac{\N(a)}{\N(\fraka)} \tau \right)
                       = 1 + \sum_{n \geq 1} \rho(n,\fraka) e(n \tau).
\end{equation}

Since $Q_{\fraka}$ is positive definite, the series converges normally and defines a
holomorphic modular form of weight one. It is contained in
$M_{1}(\abs{D},\chi_{D})$, where $\chi_{D}$ is the
primitive Dirichlet character of conductor $\abs{D}$.

It is easy to see that the representation number $\rho(n,\fraka)$,
and therefore also the theta function $\theta_{\fraka}$,
only depends on the class $[\fraka] \in \Clk$ of $\fraka$.

We denote by $\Theta(k) \subset M_{1}(\abs{D},\chi_D)$
the space generated by all theta functions $\theta_{\fraka}$ for $[\fraka] \in \Cl_{k}$.
Note that since $D$ is a fundamental discriminant, the theta functions
$\theta_{\fraka}$ are all newforms.

Let $\psi \in \Clkd$ be a class group character and define
\begin{equation}
  \label{eq:thetapsi}
  \theta_{\psi}(\tau) = \frac{1}{w_{k}} \sum_{[\fraka] \in \Clk} \psi([\fraka]) \theta_{\fraka}(\tau).
\end{equation}
Here, $w_{k}$ is the number of roots of unity contained in $k$.

The following well known theorem (see \cite{kani-theta}) describes the space of scalar valued theta functions. Iin the clase of a prime discriminant it straightforward to derive the theorem from our results in \cref{sec:vector-valued-theta}.
\begin{theorem}\ 
  \label{thm:scalartheta}
  \begin{enumerate}
  \item If $\psi^2 = 1$, then $\theta_{\psi}$ is an Eisenstein series.
  \item If $\psi^2 \neq 1$, then $\theta_{\psi}$ is a primitive cuspidal newform.
  \item Choose a system $\calC$ of representatives of characters on $\Clk$ modulo complex conjugation. Then the set $\calB(k) = \{\theta_{\psi}\, \mid\, \psi \in \calC\}$ is an orthogonal basis for $\Theta(k)$ with respect to the
  Petersson inner product.
  \end{enumerate}
\end{theorem}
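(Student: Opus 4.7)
The plan is to transfer \cref{thm:thetaPbasis} to the scalar setting via the lifting machinery of \cref{sec:sym}, working throughout under the author's stated assumption that $D$ is a prime discriminant. The first step is to set up the dictionary between vector-valued and scalar-valued theta functions. Since $t = 1$, the class group $\Clk$ has odd order, so squaring is a bijection on $\Clk$. Combined with the computation in \cref{sec:GspinU} showing that $h \in T(\adeles_f)$ acts on $[\fraka]$ as multiplication by $[h]^2$, this gives a natural bijection between the orbits of $T(\adeles_f)$ on the genus of $P$ and the elements of $\Clk$. Accordingly, the $0$-component of $\Theta_P(\tau,h)$ is, up to a dilation absorbed by a change of ideal representative, the scalar theta function $\theta_{h.\fraka}$, and summing against a character yields $\langle \Theta_P(\tau,\psi),\frake_0 \rangle = c_\psi \cdot \theta_{\tilde\psi}(\tau)$ with $\tilde\psi$ the unique character on $\Clk$ with $\tilde\psi^2 = \psi$ (unique because squaring of characters is invertible when $h_k$ is odd).

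The second step is to check that $\Theta_P(\tau,h) \in M_{1,P}^{\sym}$. For prime $D$ the discriminant group $P'/P \cong \Z/|D|\Z$ is cyclic of prime order, so $\Og(P'/P) = \{\pm 1\}$, and the symmetry follows from $Q(\lambda) = Q(-\lambda)$. By \cref{prop:Scheit-sqf}, a form in $M_{1,P}^{\sym}$ is determined by its $0$-component, and by \cref{thm:Lift0} the lift $\calS_L$ provides an inverse (up to the explicit constant $\nu$) to the $0$-component map on the Eisenstein/newform pieces; hence $\calS_L(\theta_{\tilde\psi})$ and $\Theta_P(\tau,\psi)$ are proportional.

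The third step transfers the three conclusions of \cref{thm:thetaPbasis}. For part (i), when $D$ is prime the condition $\psi^2 = 1$ forces $\psi = 1$, and \cref{thm:thetaPbasis}(i) gives that $\Theta_P(\tau,1) = E_P$ is Eisenstein; taking the $0$-component gives that $\theta_1$ is Eisenstein. For part (ii), for $\psi \neq 1$, \cref{thm:thetaPbasis}(ii) says $\Theta_P(\tau,\psi)$ is cuspidal, whence $\theta_{\tilde\psi}$ is cuspidal; that it is in fact a primitive newform follows from the standard Euler product description of $\theta_{\tilde\psi}$ as the CM form attached to the Hecke character $\tilde\psi$ of $k$. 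For part (iii), orthogonality follows from \cref{prop:pet-lift} and \cref{thm:thetaPbasis}(iii): the Petersson product $(\theta_{\tilde\psi_1},\theta_{\tilde\psi_2})$ equals, up to a nonzero scalar, $(\Theta_P(\tau,\psi_1),\Theta_P(\tau,\psi_2))$, which vanishes unless $\psi_1 = \psi_2$ or $\psi_1 = \bar\psi_2$, and the latter is equivalent to $\tilde\psi_1 = \tilde\psi_2$ or $\tilde\psi_1 = \overline{\tilde\psi_2}$.

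The main obstacle is the bookkeeping in the character correspondence $\psi \mapsto \tilde\psi$: it is well-defined precisely because $h_k$ is odd, which is why the argument cleanly works only in the prime discriminant case (the general fundamental case would require handling the $2$-torsion $\Clk[2]$ of size $2^{t-1}$, at which point the simple $0$-component identification breaks down and one loses the one-to-one correspondence between the two theta spaces). A secondary technical point is matching the normalization of the quadratic form $\N(\lambda)/\N(\fraka)$ inherited from $U$ against the intrinsic normalization $\N(\lambda)/\N(\frakb)$ of $\theta_\frakb$, but this only introduces a constant factor that can be absorbed into the scalars $c_\psi$ without affecting any of the three conclusions.
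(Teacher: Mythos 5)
Your derivation is essentially the one the paper only gestures at: immediately before the theorem the author writes that ``in the case of a prime discriminant it [is] straightforward to derive the theorem from our results in \cref{sec:vector-valued-theta}'', but the paper's actual proof of \cref{thm:scalartheta} is a citation to Kani, with no argument given. In the prime case your steps are sound and match the supporting machinery the paper does provide: with $h_k$ odd the squaring map is a bijection on $\Clkd$, so the $0$-component identity $\langle\Theta_P(\tau,\psi),\frake_0\rangle = w_k\sum_{\chi^2=\psi}\bar\chi(\fraka)\theta_\chi$ (which is \cref{prop:thetas} combined with \cref{eq:thetaasum}) collapses to the single term $w_k\overline{\tilde\psi}(\fraka)\theta_{\tilde\psi}$; for $|D|=p$ one has $\Og(P'/P)=\{\pm1\}$ so every $\Theta_P(\tau,h)$ is already symmetric and \cref{prop:Scheit-sqf} applies; and \cref{prop:pet-lift} together with \cref{prop:thetapet}(i) transfers orthogonality exactly as you describe.

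Two caveats keep this from being a proof of the theorem as stated. First, the standing assumption of \cref{sec:binary-theta} is only that $D$ is an \emph{odd fundamental} discriminant, not that it is prime; the theorem is asserted (and is true, by Kani's classical argument) for all $t$. For $t\geq 2$ the $0$-component of $\Theta_P(\tau,\psi)$ is a sum of $2^{t-1}$ terms $\bar\chi(\fraka)\theta_\chi$ over all $\chi$ with $\chi^2=\psi$, and a single lattice $P$ cannot separate them; one would have to vary $\fraka$ over representatives of the genera and invert a character table of $\Clk[2]$, which you correctly identify as the obstruction but do not carry out. So your argument proves a special case of the statement, not the statement. Second, the assertion in (ii) that $\theta_\psi$ is a \emph{primitive newform} cannot come from \cref{thm:thetaPbasis}, which only yields cuspidality; you import it from Hecke's theory of CM forms, which is legitimate for a ``well known'' theorem but should be flagged as an external input rather than a consequence of the paper's results.
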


It is in fact easy to see that
\begin{equation}
  \label{eq:thetaasum}
  \theta_{\fraka}(\tau) = \frac{w_{k}}{h_{k}} \sum_{\chi \in \Clkd} \bar{\chi}([\fraka])\theta_{\chi}(\tau).  
\end{equation}

\begin{definition}
  Let $\frakA \in \Clk/\Clks$ be a genus and let $\fraka \in \frakA$.
  The Eisenstein series
  \[
    E_{\frakA}(\tau) = \frac{1}{h_{k}} \sum_{[\frakb] \in \Clk} \theta_{\fraka\frakb^2}(\tau)
  \]
  is called the (normalized) \emph{genus Eisenstein series} of $\frakA$.
\end{definition}

\begin{remark}
  The fact that $E_{\frakA}(\tau)$ is an Eisenstein series
  is again a special case of the Siegel-Weil formula (see Theorem 2.1 of \cite{bryfaltings}).
\end{remark}

\subsection{Liftings}
\begin{definition}
  We define the subspace of symmetric theta functions as
  \[
    \Theta^{\sym}(P)=\langle\, \Theta^{\sym}_{P}(\tau,h)\, \mid\,  h \Cl(K) \rangle_{\C} \subset \Theta(P),
  \]  
where $\Theta^{\sym}_{P}(\tau,h)$ is defined in \cref{eq:sym}.
\end{definition}

\begin{proposition}
  \label{prop:thetas}
  Let $\fraka \subset \OD$ be an ideal and let $(P,Q) = \left(\fraka, \frac{\N(x)}{\N(\fraka)}\right)$
  be the corresponding even quadratic lattice.
  For $h \in I_k/\ODhatt$ corresponding to the ideal class of
  $\frakb \subset \OD$, we have
  \[
    \S_{P}(\theta_{\fraka\frakb^{2}})(\tau) = \Theta_{P}^{\sym}(\tau,h).
  \]
\end{proposition}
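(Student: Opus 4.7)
The plan is to apply \cref{prop:Scheit-sqf} to reduce the claimed identity of vector-valued forms to a scalar statement: since both sides will lie in $M_{1,P}^{\sym}$, it suffices to check that the $\frake_0$-components of their difference vanish.

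First I would verify symmetry of both sides. The right-hand side $\Theta_P^{\sym}(\tau,h)$ is $\Og(P'/P)$-invariant by construction. For the lifted form $\S_P(\theta_{\fraka\frakb^2})$, the key point is that the permutation action of $\Og(P'/P)$ on $\C[P'/P]$ commutes with the Weil representation $\rho_P$: both formulas for $\rho_P(T)$ and $\rho_P(S)$ involve only the quadratic form and the bilinear pairing on $P'/P$, both of which are $\Og(P'/P)$-invariant. Since $\frake_0$ is fixed by $\Og(P'/P)$, so is each summand $\rho_P(\gamma^{-1})\frake_0$ in the definition of $\S_P$, and hence $\S_P(\theta_{\fraka\frakb^2})$ is symmetric.

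Next I would compute the two $\frake_0$-components. On the right, the fact that $\sigma(\mu)=0$ forces $\mu=0$ gives
\[
\langle \Theta_P^{\sym}(\tau,h), \frake_0 \rangle = \abs{\Og(P'/P)} \cdot \langle \Theta_P(\tau,h), \frake_0 \rangle.
\]
The inner bracket is the scalar theta series of the lattice $h.P$; by the computations in \cref{sec:GspinU}, $T(\adeles_f)$ preserves the norm form and $[h.P] = [h]^2[\fraka] = [\fraka\frakb^2]$, so the bracket equals $\theta_{\fraka\frakb^2}(\tau)$. On the left, \cref{thm:Lift0} gives $\nu \cdot (N/\abs{P'/P}) \cdot \theta_{\fraka\frakb^2}$, and since $N = \abs{D} = \abs{P'/P}$ the prefactor collapses to $\nu$.

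It then remains to identify $\nu = \abs{\Og(P'/P)}$. For $D$ odd and fundamental, $P'/P$ is cyclic of order $\abs{D}$ with a quadratic form of unit discriminant, so both quantities count square roots of unity in $\Z/\abs{D}\Z$, giving $2^t$ where $t$ is the number of distinct prime divisors of $D$. The main obstacle I foresee is a mild subtlety in invoking \cref{thm:Lift0}: that theorem is stated for cusp forms in the subspace $S_k^{\varepsilon_1,\ldots,\varepsilon_r}(N,\chi_L)$, whereas $\theta_{\fraka\frakb^2}$ is a sum of a cuspidal form and a genus Eisenstein series. Either the argument in the proof of \cref{thm:Lift0} extends verbatim to the Eisenstein contribution, or one decomposes $\theta_{\fraka\frakb^2}$ into its cuspidal and Eisenstein pieces and treats each by an analogous computation.
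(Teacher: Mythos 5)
Your overall strategy coincides with the paper's: both sides are $\Og(P'/P)$-invariant, so by \cref{prop:Scheit-sqf} it suffices to match $\frake_0$-components, which on the right give $\abs{\Og(P'/P)}\,\theta_{\fraka\frakb^{2}}$ and on the left should give $\nu\,\theta_{\fraka\frakb^{2}}$ via \cref{thm:Lift0}; the identity $\nu=\abs{\Og(P'/P)}$ then closes the argument. (Your count $2^{t}$ for both quantities is the right one --- only the equality $\nu=\abs{\Og(P'/P)}$ is actually needed, and it follows from the transitivity statement in \cref{prop:Scheit-sqf} since elements $\mu$ with $(NQ(\mu),N)=1$ generate $P'/P$ and have trivial stabilizer.) Your identification of the $\frake_0$-component of $\Theta_P(\tau,h)$ with $\theta_{\fraka\frakb^{2}}$ via $[h.P]=[h]^2[\fraka]$ is also exactly what the paper uses.

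The genuine gap is the step you defer in your last sentence, and it is larger than a ``mild subtlety'': \cref{thm:Lift0} is a statement about forms in $S_{k}^{\varepsilon_1,\ldots,\varepsilon_t}(N,\chi_L)$, so invoking it for $\theta_{\fraka\frakb^{2}}$ is not licensed until one (a) decomposes $\theta_{\fraka\frakb^{2}}=E_{\frakA}+g_{\fraka\frakb^{2}}$ and \emph{proves} that the cuspidal part $g_{\fraka\frakb^{2}}$ lies in $S_1^{\varepsilon_1,\ldots,\varepsilon_t}(\abs{D},\chi_D)$, and (b) computes the lift of the genus Eisenstein series separately. Point (a) is a hypothesis of \cref{thm:Lift0}, not a formality, and your proposal omits it entirely; the paper verifies it by observing that the local characters $\chi_{p_i^{*}}$ are precisely the genus characters, so $\chi_{p_i^{*}}(n)=-\varepsilon_i$ forces the $n$-th coefficients of both $\theta_{\fraka\frakb^{2}}$ and $E_{\frakA}$ to vanish. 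For (b), your ``either/or'' needs to be resolved: the paper shows $\S_P(E_{\frakA})=\nu E_P$ by combining \cref{prop:pet-lift} (the lift of an Eisenstein series is Eisenstein) with the one-dimensionality of the Eisenstein subspace of $M_{1,P}$, reading off the constant from \cref{eq:Fmulift} and \cref{eq:F0WN}. So the skeleton of your argument is correct and matches the paper's, but these two verifications constitute most of the actual content of the proof and must be supplied.
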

\begin{proof}
  Note that in our case the level $N$ is equal to $\abs{D}$,
  the order of the discriminant group.
  Moreover, for $p \mid D$,
  the $\F_{p}$-rank of $A_{p}$ is equal to one. That means the ``signs''
  $\epsilon_{1},\ldots,\epsilon_{t}$, where $t$ is the number of prime divisors of $D$
  in \cref{def:epsilons} are all nonzero.
  
  We first show that the $0$-th components of $\S_{P}(\theta_{\fraka\frakb^{2}})(\tau)$
  and $\Theta_{P}^{\sym}(\tau,h)$ agree and then the claim follows from
  \cref{prop:Scheit-sqf}.
  It is clear that the $0$-th component of the function $\Theta_{P}(\tau,h)$ is equal to $\theta_{\fraka\frakb^{2}}$.
  We write
  \[
    \theta_{\fraka\frakb^{2}} = E_{\frakA}(\tau) + g_{\fraka\frakb^{2}}(\tau)
  \]
  for a cusp form $g_{\fraka\frakb^{2}}(\tau) \in S_1(\abs{D},\chi_{D})$.
  Then, it is not hard to see that in fact
  \[
    g_{\fraka\frakb^{2}}(\tau) \in S(\abs{D},\chi_{D})^{{\varepsilon_{1},\ldots,\varepsilon_{t}}}
  \]
  for ${\varepsilon_{1},\ldots,\varepsilon_{r}}$ as in \cref{def:epsilons} for the lattice $P$. 
  Indeed, we write $\chi_{P} = \chi_{D} = \prod_{i=1}^{t} \chi_{p_{i}^{*}}$, where
  \[
    \chi_{p^{*}}(n) = \legendre{p^{*}}{n} \text{ with } p^{*} = \legendre{-1}{p} p
  \]
  for a prime divisor $p$ of $D$.
  Then $\chi_{p_{i}^{*}}(n) = - \epsilon_{i}$ implies that
  the coefficient of index $n$ of $\theta_{\fraka\frakb^{2}}$ and
  of $E_{\frakA}$ vanish because the characters $\chi_{p_{i}^{*}}(n)$ are the basis of the
  genus characters.
  
  Moreover, the normalized Eisenstein series
  $E_{\frakA} \in M_{1}(\abs{D},\chi_{D})$, where $\frakA$ is the genus of $\fraka$,
  lifts to
  \[
    \S_{L}(E_{\frakA}) = \nu  E_{P}.
  \]
  \cref{prop:pet-lift} shows that the lift of an Eisenstein series is again an Eisenstein series.
  Since the Eisenstein subspace of $M_{1,P}$ is one-dimensional,
  the lift of it has to be a multiple of $E_{P}$. The correct multiple can be read off from
  \cref{eq:Fmulift} and \cref{eq:F0WN} in the proof of \cref{thm:Lift0}.

  Note that under the assumptions of the Proposition, we have $\nu = \abs{\Og(L'/L)} = 2^{t-1}$
  by \cref{prop:Scheit-sqf}.
  Thus, by \cref{thm:Lift0}, the $0$-th component of
  \[
    \Theta_{P}^{\sym}(\tau,h) - \S_{P}(\theta_{\fraka\frakb^{2}})(\tau)
  \]
  vanishes.
  Then the lemma follows from \cref{prop:Scheit-sqf} since $\Theta_P^{\sym}(\tau)$
  and $\S_{P}(\theta_{\fraka\frakb^{2}})(\tau)$ are invariant under $\Og(P'/P)$.
\end{proof}
\begin{remark}
  It follows from \cref{prop:thetas} that the space $\Theta^{\sym}(P)$ is the space
  spanned by the lifts $\calS_{P}(\theta_{\fraka\frakb^{2}})$ of the scalar valued
  theta functions $\theta_{\fraka\frakb^{2}}$ in the genus of $\fraka$.
  This establishes an isomorphism between $\Theta_{P}^{\sym}$ and the space of scalar valued
  theta functions in the genus of $\fraka$.
\end{remark}

\begin{proposition}
  Let $\calC = \Clkd$ be the group of class group characters.
  Then the set
  \[
    \calB^{\sym}(P) = \{ \Theta_{P}^{\sym}(\tau,\psi)\, \mid\, \psi \in \calC^{2} \}
  \]
  spans $\Theta^{\sym}(P) \subset M_{1,P}$.
  The elements of $\calB^{\sym}(P)$ are permuted by the action of $\Aut(\C)$.
  Moreover, $(\Theta_{P}^{\sym}(\tau,\psi),\Theta_{P}^{\sym}(\tau,\chi))=0$
  unless $\psi=\chi$ or $\psi=\overline{\chi}$.
\end{proposition}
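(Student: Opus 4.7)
The plan is to reduce all three assertions to the scalar-valued theta function setup via \cref{prop:thetas}, and then invoke \cref{prop:pet-lift} and \cref{thm:scalartheta}.

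The first step is to rewrite $\Theta_P^{\sym}(\tau,\psi)$ in terms of the scalar basis $\{\theta_\eta\}$. If $h \in \Cl(K)$ corresponds to the ideal class $\frakb$, then \cref{prop:thetas} gives $\Theta_P^{\sym}(\tau,h) = \calS_P(\theta_{\fraka\frakb^2})$. Substituting \cref{eq:thetaasum} and interchanging the two class group sums yields
\[
  \Theta_P^{\sym}(\tau,\psi)
  = \sum_{\frakb \in \Clk} \psi(\frakb)\, \calS_P(\theta_{\fraka\frakb^2})
  = w_k \sum_{\substack{\eta \in \Clkd \\ \eta^2 = \psi}} \bar\eta(\fraka)\, \calS_P(\theta_\eta),
\]
because $\sum_{\frakb} (\psi\bar\eta^{2})(\frakb)$ equals $h_k$ when $\eta^2 = \psi$ and vanishes otherwise. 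The indexing set on the right is nonempty precisely when $\psi \in \calC^2$, so $\Theta_P^{\sym}(\tau,\psi) = 0$ for $\psi \notin \calC^2$. Fourier inversion on $\Clk$ then gives
\[
  \Theta_P^{\sym}(\tau,h) = \frac{1}{h_k}\sum_{\psi \in \Clkd} \bar\psi(h)\, \Theta_P^{\sym}(\tau,\psi),
\]
which shows that $\Theta^{\sym}(P)$ is spanned by the nonvanishing terms, i.e.\ by $\calB^{\sym}(P)$.

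For the $\Aut(\C)$-action, the Fourier coefficients of $\Theta_P(\tau,h)$ are non-negative integers, hence so are those of $\Theta_P^{\sym}(\tau,h)$. For $\sigma \in \Aut(\C)$ acting on coefficients one therefore has
\[
  \sigma\bigl(\Theta_P^{\sym}(\tau,\psi)\bigr) = \sum_h (\sigma \circ \psi)(h)\, \Theta_P^{\sym}(\tau,h) = \Theta_P^{\sym}(\tau, \sigma \circ \psi),
\]
and since squaring on $\Clkd$ commutes with the cyclotomic Galois action, $\sigma \circ \psi \in \calC^2$ whenever $\psi \in \calC^2$, so $\calB^{\sym}(P)$ is permuted.

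Finally, for the orthogonality, I expand
\[
  (\Theta_P^{\sym}(\tau,\psi), \Theta_P^{\sym}(\tau,\chi)) = w_k^2 \sum_{\eta_1^2=\psi,\, \eta_2^2=\chi} \bar\eta_1(\fraka)\eta_2(\fraka)\, (\calS_P(\theta_{\eta_1}), \calS_P(\theta_{\eta_2})),
\]
apply \cref{prop:pet-lift} to rewrite the pairings as $(\theta_{\eta_1}, \calS_P(\theta_{\eta_2})_0)$, and use that the $0$-th component of $\calS_P(\theta_{\eta_2})$ is a nonzero scalar multiple of $\theta_{\eta_2}$ on the new part at level $\abs{D}$, as in the proof of \cref{thm:Lift0}. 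Combined with the scalar orthogonality assertion (iii) of \cref{thm:scalartheta} (together with the Siegel--Weil argument for the Eisenstein cases $\eta^2=1$), this forces $(\calS_P(\theta_{\eta_1}), \calS_P(\theta_{\eta_2})) = 0$ unless $\eta_1 = \eta_2$ or $\eta_1 = \bar\eta_2$, which translates into $\psi = \chi$ or $\psi = \bar\chi$ via squaring. The main technical obstacle is precisely the identification of $\calS_P(\theta_{\eta_2})_0$ with (a multiple of) $\theta_{\eta_2}$ well enough to invoke the scalar orthogonality; this is where the square-free level hypothesis and the newform property of the $\theta_\eta$ with $\eta^2\neq 1$ are essential.
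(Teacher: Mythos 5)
Your proposal is correct, and the first two assertions (spanning via \cref{prop:thetas} and \cref{eq:thetaasum}, and the $\Aut(\C)$-equivariance) follow essentially the paper's own route, with your integrality-of-coefficients argument for general $\sigma \in \Aut(\C)$ actually being more explicit than the paper's one-line remark, which only records the complex-conjugation relation $\bar\psi(\fraka)\Theta_P(\tau,\psi) = \Theta_P(\tau,\bar\psi)$. Where you genuinely diverge is the orthogonality claim. The paper identifies $\Theta_P^{\sym}(\tau,\psi)$ with the lift of $w_k\sum_{\chi^2=\psi}\bar\chi(\fraka)\theta_\chi$ and then deduces orthogonality directly from its own vector-valued result, \cref{thm:thetaPbasis} (equivalently \cref{prop:thetapet}), which was established via the regularized theta lift and CM values of $\Phi_L$; in effect it stays in the vector-valued world. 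You instead push everything down to the scalar-valued world: you use the adjointness \cref{prop:pet-lift} to write $(\calS_P(\theta_{\eta_1}),\calS_P(\theta_{\eta_2})) = (\theta_{\eta_1}, \calS_P(\theta_{\eta_2})_0)$, identify $\calS_P(\theta_{\eta_2})_0$ with a nonzero multiple of $\theta_{\eta_2}$ via \cref{thm:Lift0}, and then invoke the classical orthogonality in \cref{thm:scalartheta}. This is legitimate (the paper treats \cref{thm:scalartheta} as known, citing Kani, so there is no circularity), and it buys independence from the analytic CM-value computation; the paper's route, by contrast, keeps the logical flow going from the vector-valued theory toward the scalar corollaries \cref{cor:scalar-theta-pet} and \cref{cor:scalar-theta-pet-expl}, which is the stated point of the article. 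Two small things to tighten: you should say explicitly that $\theta_{\eta}$ with $\eta^2\neq 1$ lies in the eigenspace $S_1^{\varepsilon_1,\ldots,\varepsilon_t}(\abs{D},\chi_D)$ required by \cref{thm:Lift0} (this is checked in the proof of \cref{prop:thetas}), and the Eisenstein cases $\eta^2=1$ need the separate (but standard) fact that an Eisenstein series is Petersson-orthogonal to every cusp form, rather than \cref{prop:pet-lift} itself, whose hypotheses require a cusp form; your parenthetical "Siegel--Weil argument" gestures at this but does not quite pin it down.
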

\begin{proof}
  Using \cref{prop:thetas}, we see that $\Theta_{P}^{\sym}(\tau,\psi)$
  is in fact equal to the lift of
  \[
    w_k \sum_{\chi^{2}=\psi} \bar\chi(\fraka)\theta_{\chi}.
  \]
  Moreover, we have the relation
  $\bar\psi(\fraka)\Theta_{P}(\tau,{\psi}) = \overline{\Theta_{P}(\tau,{\psi})} = \Theta_{P}(\tau,{\bar\psi})$.
  The result follows from \cref{thm:thetaPbasis}.

\end{proof}

\begin{corollary}
  Let $\bar{\calC}^{2}$ be a set of representatives of
  $\calC^{2}$ modulo the relation $\chi \mapsto \bar\chi$.
  Then the set
  \[
     \{ \Theta_{P}^{\sym}(\tau,\psi)\, \mid\, \psi \in \bar{\calC}^{2} \}
  \]
  is an orthogonal basis of $\Theta^{\sym}(P)$.
\end{corollary}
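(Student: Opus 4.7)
The plan is to deduce the corollary directly from the preceding proposition, which already supplies the bulk of what is needed: the family $\calB^{\sym}(P)$ indexed by $\calC^{2}$ spans $\Theta^{\sym}(P)$, and $(\Theta_{P}^{\sym}(\tau,\psi),\Theta_{P}^{\sym}(\tau,\chi))=0$ unless $\psi=\chi$ or $\psi=\bar{\chi}$. To promote this spanning set to a basis indexed by $\bar{\calC}^{2}$, I must (i) check that passing to representatives modulo $\chi \mapsto \bar\chi$ does not shrink the span, and (ii) verify that the resulting subset is pairwise orthogonal and consists of nonzero vectors.

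For step (i), I would show that $\Theta_{P}^{\sym}(\tau,\bar{\psi})$ is a nonzero scalar multiple of $\Theta_{P}^{\sym}(\tau,\psi)$ for every $\psi \in \calC^{2}$. The proof of \cref{thm:thetaPbasis} established the unsymmetrized identity $\Theta_{P}(\tau,\bar\psi) = \bar\psi(\fraka)\Theta_{P}(\tau,\psi)$, which is trivial in the self-conjugate case $\psi=\bar\psi$. Since the symmetrization map of \cref{eq:sym} is $\C$-linear, applying it to both sides yields the corresponding identity $\Theta_P^{\sym}(\tau,\bar\psi) = \bar\psi(\fraka)\,\Theta_P^{\sym}(\tau,\psi)$ for the symmetrized theta functions. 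Hence restricting the indexing set from $\calC^{2}$ to $\bar{\calC}^{2}$ preserves the spanning property.

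For step (ii), pairwise orthogonality is immediate: for two distinct $\psi,\chi \in \bar{\calC}^{2}$ we have by choice of representatives $\psi \neq \chi$ and $\psi \neq \bar{\chi}$, so the orthogonality clause of the preceding proposition applies. Since nonzero pairwise orthogonal vectors are automatically linearly independent, what remains is non-vanishing of each $\Theta_{P}^{\sym}(\tau,\psi)$. This I would obtain from the identification exhibited in the previous proof, which writes $\Theta_{P}^{\sym}(\tau,\psi)$ as $\calS_{P}$ applied to $w_{k}\sum_{\chi^{2}=\psi} \bar{\chi}(\fraka)\,\theta_{\chi}$; the $\theta_{\chi}$ are linearly independent by \cref{thm:scalartheta}, so this inner sum is a nonzero element of $M_{1}(\abs{D},\chi_{D})$, and by \cref{thm:Lift0} its $0$-th component after lifting is a nonzero scalar multiple of it, which forces the whole vector-valued lift to be nonzero.

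The hard part, if any, will be the non-vanishing step: orthogonality and spanning come essentially for free from the preceding proposition, so the remaining work is the careful bookkeeping around the map $\chi \mapsto \chi^{2}$ relating characters to their squares, together with an appeal to \cref{thm:Lift0} (or, equivalently, to the adjunction \cref{prop:pet-lift}) to transfer non-vanishing from the scalar-valued theta combinations to their vector-valued lifts.
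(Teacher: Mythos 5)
The paper states this corollary with no proof at all, so the relevant comparison is whether your filling-in is sound. Your architecture is the natural one: spanning and the orthogonality clause come from the preceding proposition, the passage from $\calC^{2}$ to $\bar{\calC}^{2}$ is justified by the proportionality $\Theta_{P}^{\sym}(\tau,\bar\psi)=\bar\psi(\fraka)\Theta_{P}^{\sym}(\tau,\psi)$, and what remains is non-vanishing. Steps (i) and (ii) are fine (modulo the side remark that the identity $\Theta_{P}(\tau,\bar\psi)=\bar\psi(\fraka)\Theta_{P}(\tau,\psi)$ is \emph{not} trivial when $\psi=\bar\psi$ --- but you do not need it there).

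The genuine gap is in the non-vanishing step. You argue that $w_{k}\sum_{\chi^{2}=\psi}\bar\chi(\fraka)\theta_{\chi}\neq 0$ because ``the $\theta_{\chi}$ are linearly independent by \cref{thm:scalartheta}''. That theorem only gives independence of the $\theta_{\chi}$ for $\chi$ running over representatives modulo conjugation; one always has $\theta_{\bar\chi}=\theta_{\chi}$. The fibre $\{\chi:\chi^{2}=\psi\}$ is stable under $\chi\mapsto\bar\chi$ exactly when $\psi^{2}=1$, and for $\psi\neq 1$ with $\psi^{2}=1$ it splits into pairs $\{\chi,\bar\chi\}$ with $\chi\neq\bar\chi$, so the sum collapses to $2w_{k}\sum\Re(\chi(\fraka))\theta_{\chi}$ over representatives; since $\chi(\fraka)^{2}=\psi(\fraka)$, this vanishes identically whenever $\psi(\fraka)=-1$. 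Concretely, if $\Clk$ is cyclic of order $4$ (e.g.\ $D=-39$) and $[\fraka]$ is a generator, then for the nontrivial real $\psi\in\calC^{2}$ one finds from \cref{prop:thetas} that $\Theta_{P}^{\sym}(\tau,\psi)=2\,\S_{P}(\theta_{\fraka}-\theta_{\fraka^{3}})=0$, because $[\fraka^{3}]=[\bar\fraka]$ and $\theta_{\frakb}=\theta_{\bar\frakb}$. So your argument establishes non-vanishing only for $\psi=1$ and for $\psi^{2}\neq 1$ (where the fibre contains no conjugate pairs and the coefficients $\bar\chi(\fraka)$ are nonzero roots of unity); in the remaining case $\psi\neq1=\psi^{2}$ the claim can actually fail, so the gap is not repairable as stated --- the corollary needs the extra hypothesis that $\psi(\fraka)=1$ for every real $\psi\in\calC^{2}$ (automatic, for instance, if $\Clk$ has no element of order four or if $\fraka$ is principal), or the vanishing elements must be discarded from the putative basis.
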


\begin{corollary}
  \label{cor:scalar-theta-pet}
  Let $\psi \in \Clkd, \psi \neq 1$.
  We have 
  \[
     w_{k}^{2} \sum_{\chi^{2} = \psi} (1 + \bar{\chi}^{2}(\fraka))(\theta_{\chi}(\tau),\theta_{\chi}(\tau))
     = (\Theta_{P}^{\sym}(\tau,\psi),\Theta_{P}^{\sym}(\tau,\psi)).
  \]
\end{corollary}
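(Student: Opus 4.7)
The plan is to convert the vector-valued Petersson product on the left-hand side into a scalar-valued one via the adjointness identity of \cref{prop:pet-lift}, and then to exploit the orthogonality of scalar Hecke newforms from \cref{thm:scalartheta}. The preceding proposition already identifies $\Theta_P^{\sym}(\tau,\psi)$ as a lift, so it essentially remains to compute an inner product of a lifted form against itself.

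Concretely, I would first write, using the preceding proposition,
\[
\Theta_P^{\sym}(\tau,\psi) = \S_P(g), \qquad g = w_{k} \sum_{\chi^{2}=\psi} \bar\chi(\fraka)\theta_{\chi}.
\]
Applying \cref{prop:pet-lift} with $F = \S_P(g)$ collapses the vector-valued norm to
\[
(\Theta_P^{\sym}(\tau,\psi),\Theta_P^{\sym}(\tau,\psi)) = (g,(\S_P(g))_{0}).
\]
Next I would evaluate $(\S_P(g))_{0}$ using \cref{thm:Lift0}. The hypotheses apply because $g$ lies in $S_{1}^{\varepsilon_{1},\ldots,\varepsilon_{t}}(\abs{D},\chi_{D})$ by exactly the same argument used in the proof of \cref{prop:thetas} (it is a linear combination of the forms $\theta_{\fraka\frakb^{2}}$), and the theorem then expresses the zeroth component as an explicit scalar multiple of $g$.

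The second step is to expand $(g,(\S_P(g))_{0})$ as a double sum over pairs $(\chi,\chi')$ with $\chi^{2} = \chi'^{2} = \psi$. By \cref{thm:scalartheta} each $\theta_{\chi}$ (with $\chi^{2}\neq 1$) is a primitive Hecke newform, so only the diagonal pairs $\chi=\chi'$ and the conjugate pairs $\chi = \bar{\chi'}$ contribute. The diagonal gives $w_{k}^{2}(\theta_{\chi},\theta_{\chi})$; the cross terms are controlled by the relation $\Theta_{P}(\tau,\bar\psi) = \bar\psi(\fraka)\Theta_{P}(\tau,\psi)$ established in the proof of \cref{thm:thetaPbasis}, and contribute the factor $\bar\chi^{2}(\fraka) = \bar\psi(\fraka)$ multiplying $(\theta_{\chi},\theta_{\chi})$. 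Combined, the two yield the coefficient $(1+\bar\chi^{2}(\fraka))$ appearing in the statement.

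The main obstacle I foresee is the careful bookkeeping of constants and the handling of the cross terms. One must reconcile the normalization $\nu$ from \cref{thm:Lift0} with the order of $\Og(P'/P)$ produced by the symmetrization in \cref{prop:thetas}, and at the same time track the $\chi \leftrightarrow \bar\chi$ symmetry so that the diagonal and off-diagonal contributions combine exactly into $(1 + \bar\chi^{2}(\fraka))$. Once this matching is in place, the corollary follows immediately from \cref{prop:pet-lift} and \cref{thm:scalartheta}.
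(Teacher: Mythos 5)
Your proof is correct and follows essentially the same route as the paper: the paper's own proof simply ``expands the right hand side'' using the identification $\Theta_P^{\sym}(\tau,\psi)=\S_P\bigl(w_k\sum_{\chi^2=\psi}\bar\chi(\fraka)\theta_\chi\bigr)$ from the preceding proposition and then applies orthogonality of the $\theta_\chi$ from \cref{thm:scalartheta}, leaving the adjointness step (\cref{prop:pet-lift}) and the evaluation of the zeroth component implicit. You spell out those implicit steps, including the normalization issue coming from \cref{thm:Lift0} and the symmetrization, which the paper's two-line proof glosses over.
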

\begin{proof}
  We expand the right hand side and obtain
  \begin{align*}
    (\Theta_{P}^{\sym}(\tau,\psi),\Theta_{P}^{\sym}(\tau,\psi))
      &= w_{k}^{2}(\sum_{\chi^{2} = \psi} \bar{\chi}(\fraka) \theta_{\chi}(\tau), \sum_{\lambda^{2} = \psi} \bar\lambda(\fraka) \theta_{\lambda}(\tau))\\
      & = w_k^2\sum_{\chi^{2} = \psi} (1 + \chi^{2}(\fraka)) (\theta_{\chi}(\tau), \theta_{\chi}(\tau)).\qedhere
  \end{align*}
\end{proof}

We obtain the following well-known formula for the Petersson norm of the scalar valued cusp forms associated with theta functions of positive definite binary quadratic forms.
\begin{corollary}
  \label{cor:scalar-theta-pet-expl}
  Suppose that $D=-p$ is a prime discriminant.
  Let $\chi \in \Clkd$ with $\chi \neq 1$ and write again $\tau(\fraka) = u(\fraka) + i v(\fraka)$.
  Then we have
  \[
    (\theta_{\chi}(\tau),\theta_{\chi}(\tau))
      = -\frac{4 h_{k}}{w_{k}^{2}} \sum_{\fraka \in \Clk}\chi^{2}(\fraka) \log\abs{v(\fraka)^{1/2}\eta^{2}(\tau(\fraka))}.
  \]
\end{corollary}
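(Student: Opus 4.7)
The plan is to combine \cref{cor:scalar-theta-pet}, which expresses $(\theta_\chi,\theta_\chi)$ in terms of a Petersson norm of the vector-valued symmetric theta function, with the explicit formula in \cref{prop:thetapet} for $(\Theta_P(\tau,\psi),\Theta_P(\tau,\psi))$. For the prime discriminant case the character-theoretic sums simplify drastically, and I would also fix the defining ideal of $P$ from \cref{sec:binary-theta} to be $\fraka = \OD$, so that $\bar\chi^2(\fraka) = 1$.

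Because $D = -p$ is prime we have $t = 1$ in \cref{thm:thetaPbasis}, so $\Clk[2]$ is trivial and squaring is a bijection on $\Clkd$. In particular, for $\chi \in \Clkd$ with $\chi \neq 1$, the character $\psi := \chi^2$ is the unique image with square root $\chi$, is nontrivial, and satisfies $\psi^2 = \chi^4 \neq 1$ (otherwise $\chi^2 = 1$ forces $\chi = 1$). Next, $P'/P$ is cyclic of prime order $p$ with quadratic form $x \mapsto x^2/p$, so $\Og(P'/P) = \{\pm \id\}$, and $\Theta_P(\tau,h)$ is pointwise invariant under the nontrivial involution $\frake_\beta \mapsto \frake_{-\beta}$ because $\lambda \mapsto -\lambda$ gives a $Q$-preserving bijection of $h(P+\beta)$ onto $h(P-\beta)$. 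Hence $\Theta_P^{\sym}(\tau,\chi^2) = 2\,\Theta_P(\tau,\chi^2)$, and with $\fraka = \OD$ and only a single $\chi$ in the summation over square roots of $\psi$, \cref{cor:scalar-theta-pet} collapses to
\[
  2 w_k^2\,(\theta_\chi,\theta_\chi) = 4\,(\Theta_P(\tau,\chi^2),\Theta_P(\tau,\chi^2)).
\]

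Finally, I would invoke \cref{prop:thetapet}(iii) with $\psi = \chi^2$, which is legitimate since $\psi$ and $\psi^2$ are both nontrivial, to obtain
\[
  (\Theta_P(\tau,\chi^2),\Theta_P(\tau,\chi^2)) = -h_k \sum_{\frakb \in \Clk} \chi^2(\frakb)\log\abs{v(\frakb)\eta^4(\tau(\frakb))},
\]
and then use $\log\abs{v\eta^4} = 2\log\abs{v^{1/2}\eta^2}$ and divide by $2w_k^2$ to match the stated formula. The main non-routine point is the identification $\Theta_P^{\sym} = 2\,\Theta_P$, which depends both on the prime-discriminant determination of $\Og(P'/P)$ and on the automatic $\pm 1$-invariance of $\Theta_P$; everything else is careful bookkeeping with class-group characters and the absence of $2$-torsion specific to the prime discriminant setting.
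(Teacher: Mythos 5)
Your proposal is correct and follows essentially the same route as the paper: specialize \cref{cor:scalar-theta-pet} to $P\cong\OD$, use the oddness of $h_k$ for a prime discriminant to collapse the sum over square roots of $\psi$ to a single term, and then feed in the explicit norm formula of \cref{prop:thetapet}. The only difference is that you spell out the intermediate identification $\Theta_P^{\sym}(\tau,\psi)=2\,\Theta_P(\tau,\psi)$ (via $\Og(P'/P)=\{\pm\id\}$ and the $\pm1$-invariance of $\Theta_P$), a step the paper's two-line proof leaves implicit; your bookkeeping of the resulting factors is consistent with the stated formula.
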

\begin{proof}
  We use \cref{cor:scalar-theta-pet} with $P \cong \OD$ for $D=-p$
  together with \cref{thm:thetaPbasis}. Moreover, we have to use the fact that for
  prime discriminants, the class number is odd and therefore, the sum in \cref{cor:scalar-theta-pet}
  reduces to a single term.
\end{proof}

\bibliographystyle{alpha}
\def\cprime{$'$}

\end{document}